\documentclass[reqno,oneside,a4paper,11pt]{amsart}
%\usepackage{amssymb}
%%%%%%%%%%%%%%%%%%%%%%%%%%%%%%%%%%%%%%%%%%%%%%%
%\typeout{FULLPAGE}
% \topmargin 0pt
% \advance \topmargin by-\headheight
% \advance \topmargin by-\headsep
% \textheight 8.9in
 \oddsidemargin 10pt
 \marginparwidth 0.95in
 \textwidth 6.0in
\usepackage[utf8]{inputenc}
\usepackage{mathtools} %superimpose objects
%%%%%%%%%%%%%%%%%%%%%%%%%%%%%%%%%%%%%%%%%%%%%%%
\usepackage{amsmath,amsthm,amsbsy,amssymb, comment, booktabs, float}
\usepackage{arydshln} %DASHLINE IN TABLES

\usepackage{physics} 

\usepackage{xparse}     % <----- for '\NewDocumentCommand' command (necessary)

\NewDocumentCommand{\mybar}{ O{0.860} O{0.3pt} m }{%  <---- Set the default values here
    \mathrlap{\hspace{#2}\overline{\scalebox{#1}[1]{\phantom{\ensuremath{#3}}}}}\ensuremath{#3}
}

%%%%%%%%%%%%%%%%%%%%%%%%%%%%%%%%%%%%%%%%%%%%%%%
\newcommand{\cP}{\mathcal{P}}
\newcommand{\cL}{\mathcal{L}}

\newcommand{\cM}{\mathcal{M}}

% \newcommand{\bx}{\mathbf{x}}
%\newcommand{\bx}{\boldsymbol{x}}

%\newcommand{\oO}{\operatorname{O}}

% To adjust the extra space in \pmod
\makeatletter
\renewcommand{\pod}[1]{\allowbreak\mathchoice
  {\if@display \mkern 18mu\else \mkern 2mu\fi (#1)}
  {\if@display \mkern 6mu\else \mkern 2mu\fi (#1)}
  {\mkern4mu(#1)}
  {\mkern4mu(#1)}
}
\makeatother

\usepackage{bm}
\newcommand*{\B}[1]{\bm{#1}}
\newcommand{\ba}{\B{a}}
\newcommand{\bb}{\B{b}}

\newcommand{\bF}{\B{F}}

\newcommand{\bT}{\B{T}}
\newcommand{\balpha}{\B{\alpha}}
\newcommand{\bbeta}{\B{\beta}}
\newcommand{\bdelta}{\B{\delta}}
\newcommand{\bgamma}{\B{\gamma}}
\newcommand{\biota}{\B{\iota}}
\newcommand{\bnu}{\B{\nu}}

% \providecommand{\lfrac}[2]{\left(\frac{#1}{#2}\right)} 
%Legendre Symbol, se poate și mai frumos!!!
% smaller or larger fractions; besides \tfrac, which is so small
\newcommand{\sdfrac}[2]{\mbox{\small$\displaystyle\frac{#1}{#2}$}}

\newcommand{\PG}{\operatorname{PG}}
\newcommand{\aRb}{\rotatebox[origin=c]{90}{$\models$}}
% \succ
% \succeq
% \Vdash
% \amalg
% \asymp
% local file symbols-a4.pdf
% \usepackage{stmaryrd}
% \renewcommand{\aRb}{\curlywedgeuparrow}
%  \curlywedgedownarrow 
\renewcommand{\aRb}{\asymp}

\newcommand{\ZZ}{\mathbb{Z}}

\newcommand{\FF}{\mathbb{F}}

% \newcommand{\FF}{\texttt{F}} % for the Fibonacci word
 % for the Fibonacci word

\theoremstyle{plain}
\newtheorem{theorem}{Theorem}

\newtheorem{lemma}{Lemma}
\newtheorem{proposition}{Proposition}

\newtheorem{conjecture}{Conjecture}

\newtheorem{remark}{Remark}

\theoremstyle{definition}

% \theoremstyle{remark}
% \newtheorem*{notation}{Notation}

%\numberwithin{equation}{section}

% change-the-font-of-figure-captions
% KOMA-Script provides commands for customizing fonts of certain elements:
% \setkomafont{caption}{\footnotesize\itshape}
% \setkomafont{captionlabel}{\usekomafont{caption}}
%\usepackage[skip=2pt,font=tiny]{caption}
% \usepackage[font={small,it}]{caption}
% \usepackage[font={small,color=red}]{caption}
\usepackage[font={up,small}]{caption}
% https://latex-tutorial.com/caption-customization-latex/
% \captionsetup{font={small,sc,onehalfspacing,color=red}}
\captionsetup{textfont={small,sl}}
\captionsetup{width=0.88\textwidth,skip=6pt} % margin={left amount,right amount}
% The default for the standard document classes is skip=10pt
% \setlength{\belowcaptionskip}{-7pt}

\usepackage[usenames,dvipsnames,svgnames,table]{xcolor}
\usepackage{xcolor} 
\definecolor{orange}{rgb}{1,0.5,0}
\definecolor{Red}{rgb}{.795,0.015,0.017}
\definecolor{Ggreen}{rgb}{0.,0.675,0.0128}
% \definecolor{Bblue}{rgb}{0,0.15,1}
\definecolor{Bblue}{rgb}{0.16,.32,0.91}

\usepackage[hyphens]{url}

 \usepackage[backref=page]{hyperref}
\hypersetup{
    colorlinks = true,
linkcolor={Red},
urlcolor={blue},
citecolor={Ggreen},    
urlcolor = {blue},
citebordercolor = {0.33 .58 0.33},
 linkbordercolor = {0.99 .28 0.23},
 breaklinks=true
}
  % breaks the link if too long
% \usepackage[square,numbers,sort&compress]{natbib}
% seems to be the same as
\usepackage{cite}% condensed citations

%% pentru imagini cu subfigures
\usepackage{subfigure}  % package for subfigure formatting
\usepackage[pdftex]{graphicx}
\graphicspath{{IMAGES/}{IMAGESFR/}{IMAGESH/}{./}}

% Renew counter in subfigure with my own sequence left/right instead of a-b
% How can I define my own sequence of symbols for a new counter style
% https://tex.stackexchange.com/questions/291723/how-can-i-define-my-own-sequence-of-symbols-for-a-new-counter-style
% You can easily adapt the definition of \alph which is defined like this in latex.ltx
\makeatletter
\def\mysequence#1{\expandafter\@mysequence\csname c@#1\endcsname}
\def\@mysequence#1{%
  \ifcase#1\or left\or right\or altceva\else\@ctrerr\fi}
\makeatother
% \renewcommand\thesection{\mysequence{section}}
% \renewcommand{\thesubfigure}{\footnotesize(\mysequence{subfigure})}

% \usepackage{tipx}
% \usepackage{fc}
% \usepackage{tipa}
%%%%%%%%%%%%%%%%%%%%%%%%%%%%%%%%%%%%%%%%%%%%%%%%%%%%%%%%%%%%%%%%%%%%%%%%%%%
% \usepackage[utf8]{inputenc}
% \usepackage{fontspec}

% How to change the year 1991 to 2020 of Mathematics Subject Classification in amsart
% default in TeX Live 2020
% need to update TeX system to TeX Live 2020 

\makeatletter
\@namedef{subjclassname@2020}{\textup{2020} Mathematics Subject Classification}
\makeatother

\begin{document}

\title[On quasi-periodicity in Proth-Gilbreath triangles]
{On quasi-periodicity in Proth-Gilbreath triangles}

\author{Raghavendra N. Bhat}
\address[Raghavendra N. Bhat]{
Department of Mathematics,University of Illinois at Urbana-Champaign, Urbana, IL 61801, USA
% Department of Mathematics, University of Illinois, 1409 West Green 
% Street, Urbana, IL 61801, USA
}
\email{rnbhat2@illinois.edu}

\author[Cristian Cobeli]{Cristian Cobeli}
\address[Cristian Cobeli]{"Simion Stoilow" Institute of Mathematics of the Romanian Academy,~21 Calea Grivitei Street, P. O. Box 1-764, Bucharest 014700, Romania}
\email{cristian.cobeli@imar.ro}

\author{Alexandru Zaharescu}
\address[Alexandru Zaharescu]{Department of Mathematics,University of Illinois at Urbana-Champaign, Urbana, IL 61801, USA,
% \address[Alexandru Zaharescu]{Department of Mathematics, University of Illinois, 1409 West Green 
% Street, Urbana, IL 61801, USA,
% .}
%
% \address[Alexandru Zaharescu]{
and 
"Simion Stoilow" Institute of Mathematics of the Romanian Academy,~21 
Calea Grivitei 
Street, P. O. Box 1-764, Bucharest 014700, Romania}
\email{zaharesc@illinois.edu}

\subjclass[2020]{Primary 11B37; Secondary 11B39, 11B50.}
% 11B50
% 11B99 11B85
% 11Bxx Sequences and sets
% 11B99 None of the above, but in this section
% 05A05 Permutations, words, matrices
% 68R15 Combinatorics on words
% 11M41 Other Dirichlet series and zeta functions 

\thanks{Key words and phrases:
 Proth-Gilbreath Conjecture,
 quasi-periodicity, formal power series, Fibonacci sequences, SP numbers.}

\begin{abstract}
Let $\PG$ be the Proth-Gilbreath operator that transforms a sequence of integers into the sequence 
of the absolute values of the differences between all pairs of neighbor terms.
Consider the infinite tables obtained by successive iterations of $\PG$ applied to different 
initial sequences of integers.
We study these tables of higher order differences and
characterize those that have near-periodic features.
As a biproduct, we also obtain two results on a class of formal power series over
the field with two elements $\FF_2$
that can be expressed as rational functions in several ways.
\end{abstract}
\maketitle

% \section{Introduction and Summary of previous results}
\section{Introduction and Summary of previous results}\label{Introduction}
% What could be simpler than finding the distances between consecutive terms of a sequence of integers!
% But when the operation is repeated, the result can soon lead to unreserved amazements.

Let us consider the evolutionary process that replaces a sequence of integers $\ba=\{a_k\}_{k\ge 1}$ 
with the distances between its consecutive terms.
We write the new generation of differences shifted under the parent generation so that under 
any two consecutive terms of $\ba$, just below, is the distance between them.
Repeating the process produces the sequences of higher-order differences. 
These are recorded in the following triangle, which can be finite or infinite as the initial 
sequence $\ba$ is:
% Row extra space \setlength{\extrarowheight}{20pt} 
% Column space (default is 6pt) \setlength{\tabcolsep}{12pt}
\begin{equation}
\tag{P-G}\label{eqPGTriangle}
\setlength{\tabcolsep}{3pt}
   \begin{tabular}{ccccccccccccccccccccc}%\label{eqFabcd}
 & $a_1$ && $a_2$ && $a_3$ && $a_4$ && $a_5$ && $a_6$ && $\dots$ &\\[2mm]
   & &  $d_1^{(1)}$ &&  $d_2^{(1)}$ &&  $d_3^{(1)}$ &&  $d_4^{(1)}$ &&  $d_5^{(1)}$   && $\dots$ \\[2mm]
  &   && $d_1^{(2)}$ && $d_2^{(2)}$ && $d_3^{(2)}$ && $d_4^{(2)}$ && $\dots$ &  &\\[2mm]  
   & &   &&  $d_1^{(3)}$ &&  $d_2^{(3)}$ &&  $d_3^{(3)}$ &&  $\dots$   &  \\[2mm]
     &   &&  && $\dots$  && $\dots$  && $\dots$  && &  &%\\[2mm] 
  \end{tabular}
\end{equation}
where
\begin{equation*}
   d_k^{(j+1)} := \big| d_{k+1}^{(j)} - d_{k}^{(j)}\big|
   \quad \text{ and } \quad d_k^{(0)} := a_k \quad \text{ for  $k\ge 1$.}\\[1mm]
\end{equation*}
The initial sequence is also called the sequence of differences of order $0$.
The key element of the definition is taking the absolute value of differences, 
which makes all the elements of the triangle~\eqref{eqPGTriangle} positive.
The operation that transforms a line to another by taking the absolute differences
of nearby integers is also called the $\PG$ or the \textit{Proth-Gilbreath operator}.

The Proth-Gilbreath procedure produces tables of numbers of which their truncated triangles
are part of a special family. 
A slightly modified rule, which, by definition, adds borders to the generating triangles 
has the effect that the growth is apparently reversed.
All these number triangles can also be seen as symbolic dynamic systems that
collect and structure a lot of information and links with 
other not necessarily related fields.
In particular the modular versions of various variants of Pascal triangle, 
the outcome of Ducci games and Proth-Ducci triangles 
share and complement each other properties of an arithmetic, combinatorial and probabilistic nature
(see~\cite{Pru2022,Gil2011,CZ2023,CPZ2016,CZ2014,CZ2013,CCZ2000,CZZ2013} and the references therein).

The left-edge of the~\eqref{eqPGTriangle} triangle is particularly important because it somehow sums up by averaging 
the differences of all orders.
The interest was raised especially by Proth~\cite{Pro1878} in the 19th century and then, 
independently, by Gilbreath\cite{KR1959, Gil2011} 
(see also~\cite[Problem A10]{Guy2004} and~\cite{Odl1993}) in the mid-20th century 
with the observation that if the first line that generates the 
triangle~\eqref{eqPGTriangle} is the sequence of primes, then on the left-edge there are only 
ones.
The fact that is expected to be very likely true is currently in the conjecture stage.
The problem is included in the selected lists of Guy~\cite[Example~12]{Guy1988} 
and Montgomery~\cite[Appendix Problem~68]{Mon1994})
and has not been proven yet even 
whether there are an infinity of ones on the left side of the triangle of high order differences.

The higher order difference rows are mainly influenced by the numbers on the first line.
And yet, even for sequences somehow related to each other it can be found that
the numbers on the left-edge can have a very different structure.
One such example is the sequence of \textit{square-primes}~\cite{Bha2022a,Bha2022b,BMa2022}.
They are the elements of the ordered union of the sequence of primes scaled 
by squares larger than $1$:
\begin{equation*}
   \begin{split}
   S\cP : = \bigcup_{k\ge 2} \{k^2 p \mid p \text{ prime}\}.
   \end{split}   
\end{equation*}
Let $s_n$ denote the $n$th square-prime number.
There are $21$ square-primes in the first hundred natural numbers:
\begin{equation*}
    8, 12, 18, 20, 27, 28, 32, 44, 45, 48, 50, 52, 63, 68, 72, 75, 76, 80, 92, 98, 99.
\end{equation*}
The ordered sequence $S\cP$ can be thought of as a superposition of 
layers of primes scaled by non-trivial squares.
The rarity of the squares and the multitude of the primes combine to a density of the square-primes 
that has the same order of magnitude with that of the primes.
Thus, the analogue of the prime number theorem gives the following estimate~\cite{Bha2022a} for 
the size of $s_n$, namely
\begin{equation*}
   \begin{split}
   s_n = \big(\zeta(2) - 1\big)\cdot \sdfrac{n}{\log n} + O\left(\sdfrac{n}{\log^2 n}\right).
   \end{split}   
\end{equation*}
We also mention, among the characteristic properties, that there are infinitely many
`twin' square-primes that are next to each other~\cite{Bha2022a}, such as $(27,28)$ or $(44,45)$,
(unlike the still incompletely solved conjecture that the sequence of twin primes at distance $2$ 
is infinite).
Emphasizing the aspect of proximity, we further note that an analogue of Dirichlet's Theorem for 
prime numbers in arithmetic progressions holds also for square-primes 
only with a different density.

%%%%%%%%%%%%%%%%%%%%%%%%%%%%%%%%%%%%%%%%%%%%%%%%%%%%%%%%%%%%%%%%%%%%
% https://sage.syzygy.ca/jupyter/user/sucodru/notebooks/SQUARE_PRIMES/SP_SQUARE-PRIMES.ipynb
\begin{figure}[ht]
 \centering
 \mbox{
 \subfigure{
    \includegraphics[width=0.48\textwidth]{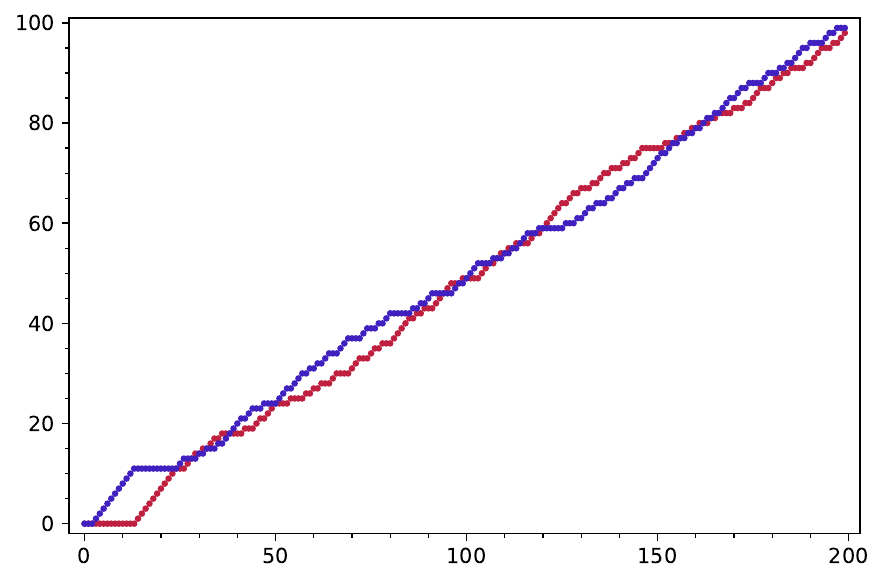}
 \label{FigLeftEdgeSPs1}
 }%\quad
 \subfigure{
    \includegraphics[width=0.48\textwidth]{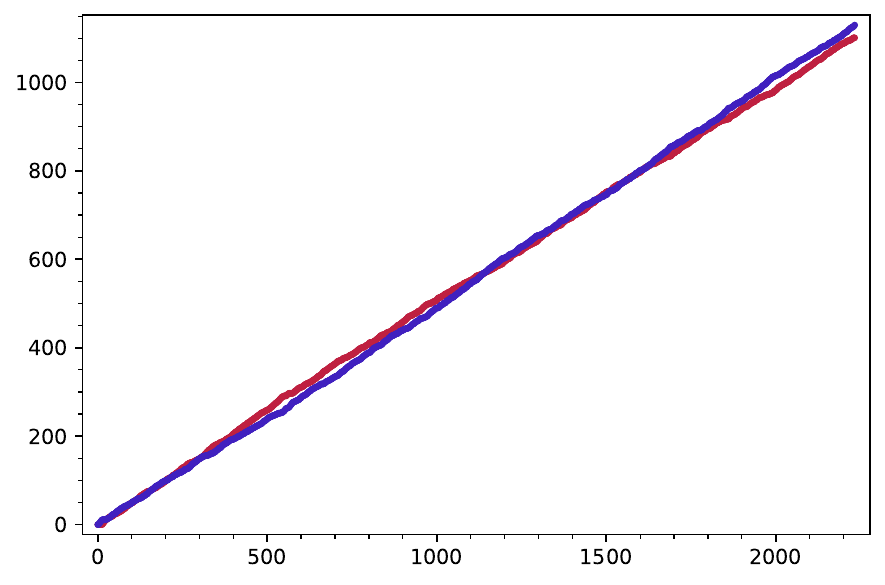}
 \label{FigLeftEdgeSPs2}
 }
 }
\caption{The number of $1$'s versus the number of $0$'s on the left-edge 
of the~\eqref{eqPGTriangle} with square-prime numbers on the first row.
The image on the left shows the first $200$ values and the one on the right 
shows $2234$ values obtained from the square-primes less than $20000$.
In total there are $1101$ ones and $1130$ zeros.
}
 \label{FigLeftEdgeSPs}
 \end{figure}
%%%%%%%%%%%%%%%%%%%%%%%
Triangle~\eqref{eqPGTriangle} generated by the sequence of square-primes shows interesting properties.
For instance, apart from the first three numbers, the left-edge seems to contain only 
ones and zeros in roughly equal proportions (see Figure~\ref{FigLeftEdgeSPs}).
We do not know a proof of this fact, but this kind of property is certain to hold for some
subsequences of square-primes.

\begin{theorem}[2023,\cite{BCZ2023}]\label{TheoremGPSP}
There exits an infinite subsequence of square-prime numbers 
that generates a~\eqref{eqPGTriangle} triangle where every other element 
on the left-edge is $1$.
% There exists an infinite subsequence of square-prime numbers such that every other element 
% on the left-edge of the generated~\eqref{eqPGTriangle} triangle is $1$. 
\end{theorem}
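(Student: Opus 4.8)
The plan is to control the left edge through the sequence of first differences. Writing $e_k := b_{k+1}-b_k$ for the gaps of the chosen subsequence $b_1<b_2<\cdots$ of square-primes, the portion of the left edge lying below $b_1$ is exactly the left edge of the \eqref{eqPGTriangle} triangle generated by $(e_k)$, and the entry $d_1^{(j)}$ depends only on $b_1,\dots,b_{j+1}$. This locality lets me build the subsequence one term at a time and decide each controlled left-edge entry before it is frozen.

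For the target pattern I would exploit the self-similarity of generalized Fibonacci sequences under $\PG$: if a sequence $S$ satisfies $S_{k+1}=S_k+S_{k-1}$ then $\PG(S)_k=|S_{k+1}-S_k|=S_{k-1}$, so the triangle is a shifting Fibonacci array whose left edge collapses to a bounded, purely periodic $0/1$ word even though the bulk entries are huge. I would engineer $(e_k)$ to imitate this self-similar collapse, but tuned so that the induced left-edge word carries the value $1$ at every other position. Tracking which positions are forced is a linear (mod $2$) problem: over $\FF_2$ the operator $\PG$ is the plain difference operator, the left edge becomes $\sum_i\binom{j}{i}b_{i+1}\pmod 2$, and the admissible periodic patterns are read off from a rational generating function over $\FF_2$ whose denominator is the Fibonacci-type polynomial — the same identity that yields the power-series biproduct advertised in the abstract.

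With the pattern fixed, the remaining task is to realize it with genuine square-primes. Here I would use two inputs from the square-prime toolbox: the infinitude of twin square-primes (consecutive integers both in $S\cP$) to manufacture the ``$+1$'' events that deposit an exact $1$ on the left edge, and the Dirichlet-type theorem for square-primes in arithmetic progressions to pin the parities at the intervening positions while leaving the magnitudes free. The block is arranged so that the unavoidably large gaps between widely-spaced square-primes are routed into the uncontrolled (``every other'') positions, and a telescoping/cancellation lemma guarantees that these large values annihilate before they can reach a controlled position. The construction is then closed by induction: at each stage the relevant square-prime subfamily is infinite, so an admissible next term always exists, the sequence never stalls, and we obtain the desired infinite subsequence.

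The hard part will be this magnitude control, i.e.\ upgrading ``odd'' to ``exactly $1$.'' The mod-$2$ bookkeeping cheaply tells me which left-edge positions are odd, but in the pure Fibonacci model boundedness is a gift of the exact recurrence, whereas real square-prime gaps are unbounded and cannot satisfy any recurrence. The crux is therefore the cancellation lemma: proving that, with the chosen block structure, every large gap placed at an uncontrolled position cancels in the finitely many rows before it would otherwise contaminate a controlled entry, so that the controlled entries equal $1$ and not merely an odd number. Making this cancellation simultaneously compatible with the arithmetic constraints — which residue classes and which small-distance square-prime pairs are provably available at once — is where the real difficulty lies.
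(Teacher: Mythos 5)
Your two structural observations are sound---the left edge below $b_1$ is the left edge of the gap triangle, and $d_1^{(j)}$ depends only on $b_1,\dots,b_{j+1}$---and your instinct that large gaps must be made to cancel is in fact the right mechanism. But the proposal leaves that ``cancellation lemma'' unproved while admitting it is the crux, and it is not a peripheral technicality: it is the whole theorem. It does have a short proof, but only after fixing a block structure your plan never pins down: take the chosen square-primes in \emph{pairs}, the two members of the $i$-th pair at a small prescribed distance $g_i$, consecutive pairs separated by voids $H_i$ growing superexponentially. A row of the form $s_1,B_1,s_2,B_2,\dots$ (small and huge entries alternating) passes, after two applications of $\PG$, to $|s_1-s_2|,\;B_2-B_1,\;|s_2-s_3|,\;B_3-B_2,\dots$, because each huge $B_i$ enters the intermediate row in the two adjacent entries $B_i-s_i$ and $B_i-s_{i+1}$, whose difference kills it. So the huge entries never need to ``annihilate''; they are quarantined forever at every other position, and the controlled left-edge entry at depth $2\ell+1$ is exactly the $\ell$-th iterated absolute difference of the small sequence $(g_i)$.

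That last fact exposes the step in your plan that would actually fail: the controlled entries are iterated differences of $(g_i)$, not the $g_i$ themselves, so using twin square-primes to ``deposit an exact $1$'' cannot work. If every small gap equals $1$, the controlled positions receive the left edge of the triangle generated by $1,1,1,\dots$, which is $1,0,0,0,\dots$; you would prove that every other left-edge entry is $0$, not $1$. What is required is that $(g_i)$ itself generate a~\eqref{eqPGTriangle} triangle with all-ones left edge, the canonical choice being $g_i=2^{i-1}$, a fixed point of $\PG$ as in Proposition~\ref{PropositionF}. The real arithmetic input is therefore: for every $k\ge 0$ there exist pairs of square-primes at distance exactly $2^k$ beyond any prescribed threshold. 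Neither the Dirichlet analogue nor mod-$2$ bookkeeping supplies this---parity arguments certify ``odd,'' never ``equal to $1$,'' as you concede---but the multiplicative structure of square-primes does: if $(u,u+1)$ are both square-primes, so are $(4^ju,\,4^ju+4^j)$, which converts the infinitude of twins into pairs at distance $4^j$; a Pell-type family such as $5b^2-3a^2=2$ (e.g.\ $(243,245)=(9^2\cdot 3,\,7^2\cdot 5)$, with infinitely many solutions) gives pairs at distance $2$, hence $2\cdot 4^j$ after scaling; together these realize every power of $2$. With those pairs, rapidly growing separations, and the two-row cancellation identity above, the induction closes; without the power-of-two gap sequence and its realization, your construction cannot produce the ones the theorem asserts.
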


To test and compare, we filtered out the integer parts of the integers in the triangles
keeping only the remainders of their division by some $d\ge 2$.
The results in three different cases for two moduli $d$ are shown in Figure~\ref{FigGapsPSR27}.
The outcome is singular only for the case of primes mod $2$.
There the shape is trivial because of the simple reason that $2$ is the only even prime number.
Apart from the colors representing the different residue classes mod $d$,
the pattern structure looks similar in all cases.
The intermediate position of the square-primes between primes and random numbers 
as the first line is not fortuitous.
It is just a first step ahead of the \textit{cube-primes} and \textit{higher-power-primes} that
yield~\eqref{eqPGTriangle} triangles that place themselves in what appears as a continuous 
transformation of order in a distinguished class of patterns.

%%%%%%%%%%%%%%%%%%%%%%%%%%%%%%%%%%%%%%%%%%%%%%%%%%%%%%%%%%%%%%%%%%%%
% https://sage.syzygy.ca/jupyter/user/sucodru/notebooks/SQUARE_PRIMES/SP_SQUARE-PRIMES.ipynb
\begin{figure}[ht]
 \centering
    \includegraphics[width=0.315\textwidth]{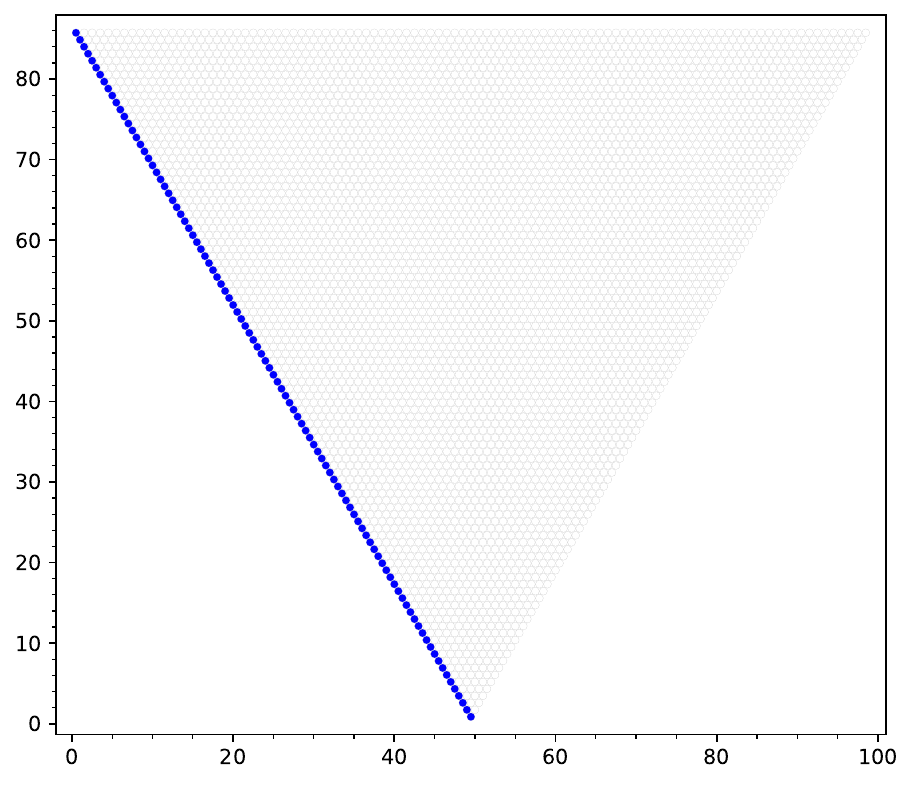}
    \includegraphics[width=0.315\textwidth]{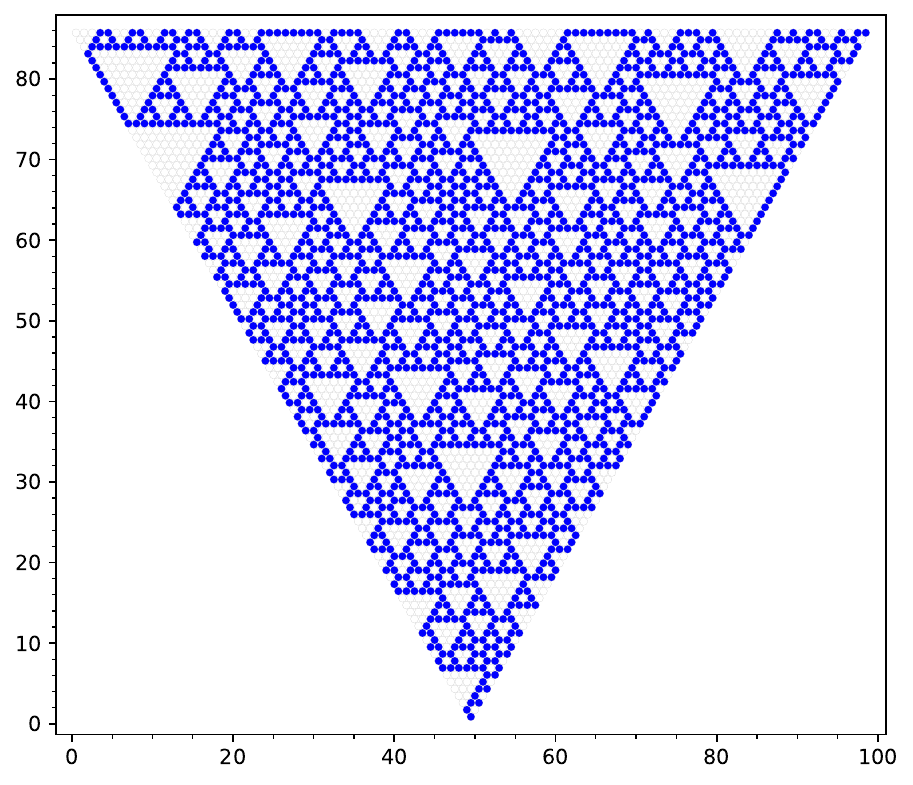}
    \includegraphics[width=0.315\textwidth]{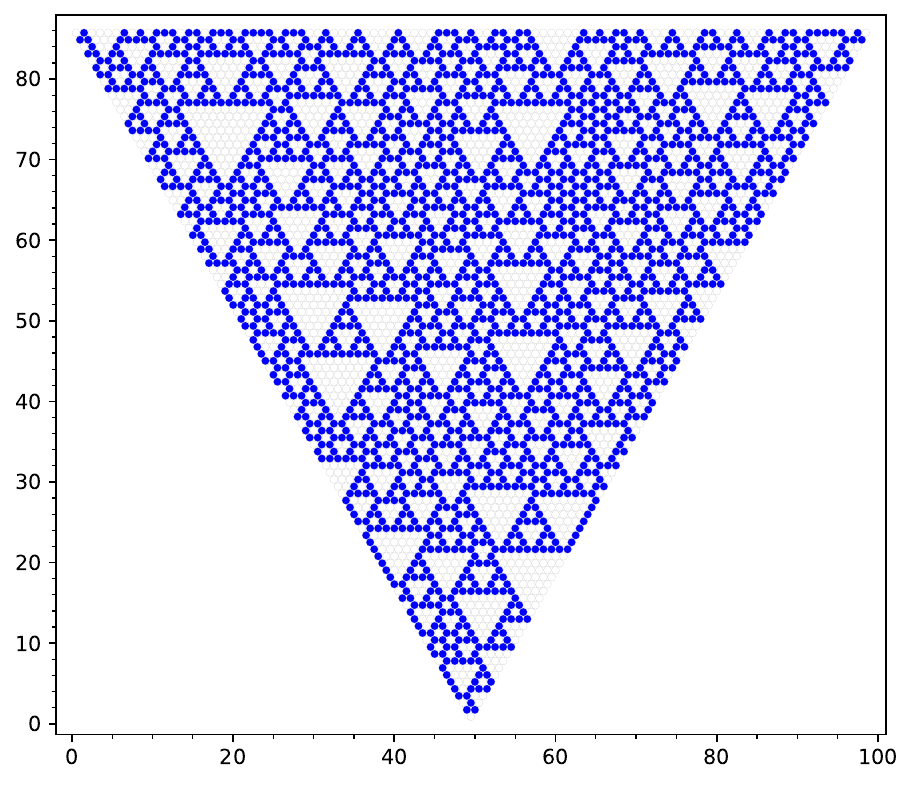}
    \includegraphics[width=0.315\textwidth]{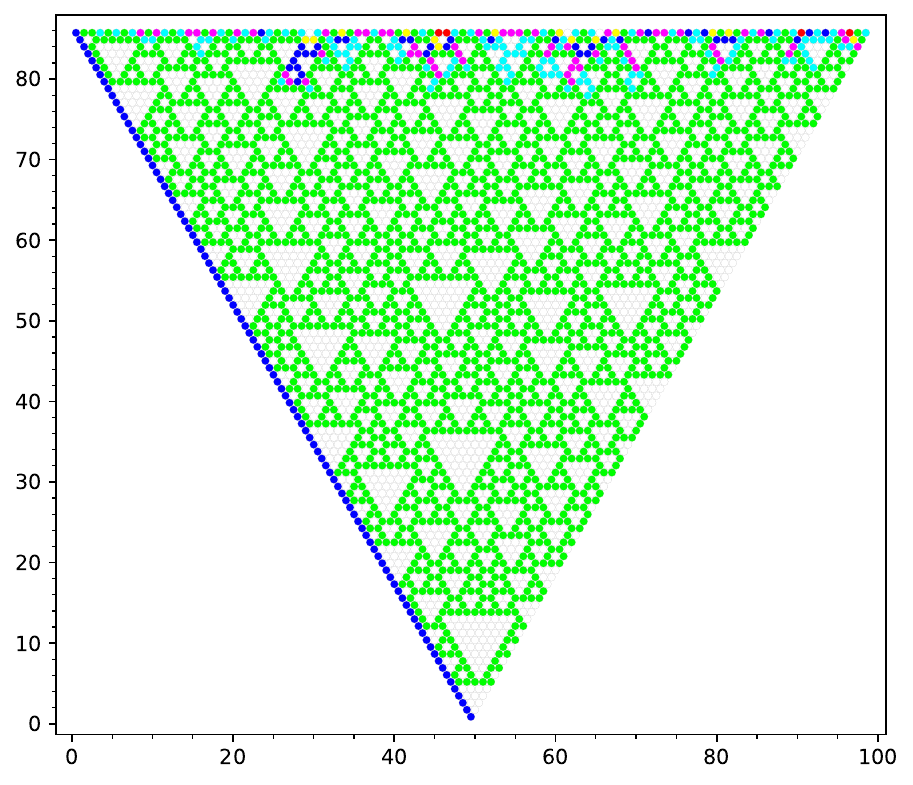}
    \includegraphics[width=0.315\textwidth]{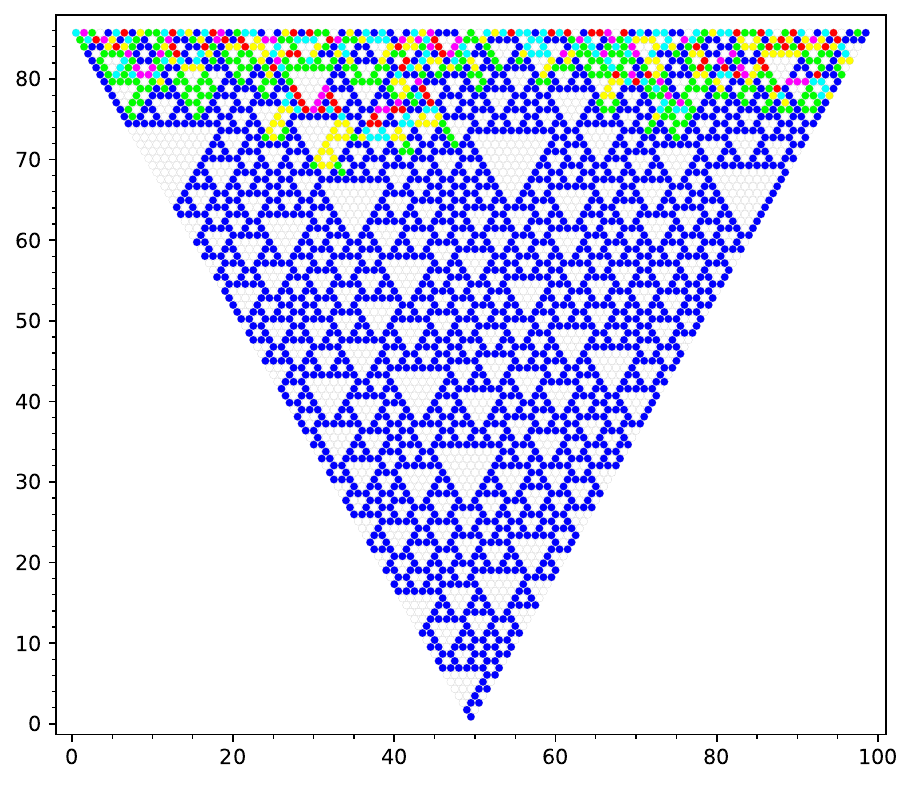}
    \includegraphics[width=0.315\textwidth]{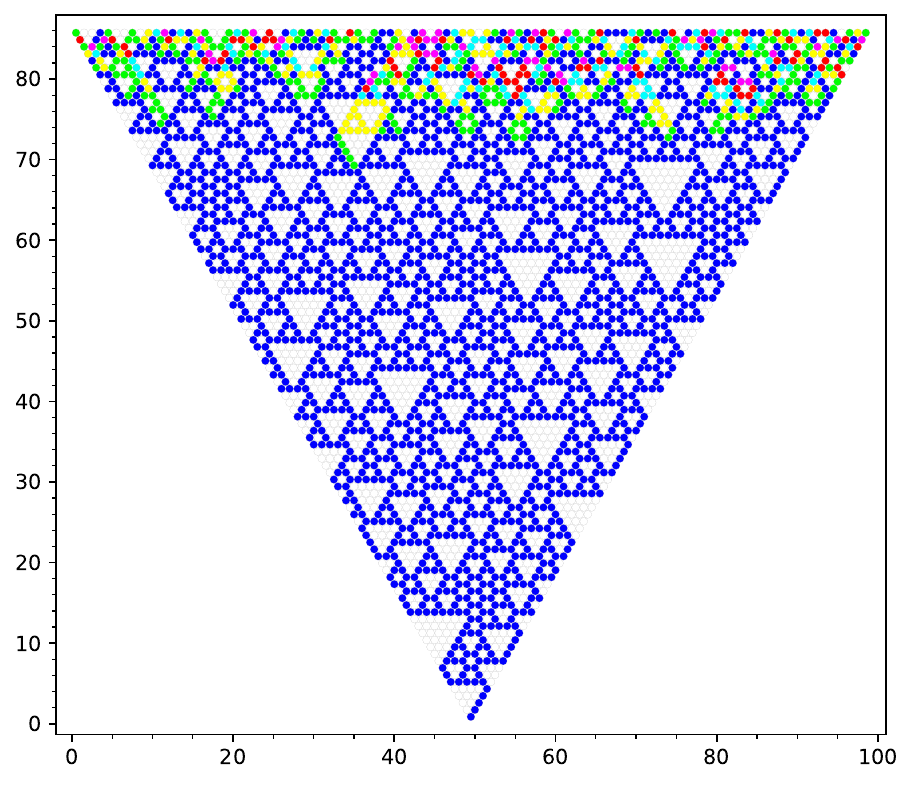}
\caption{The gaps in the~\eqref{eqPGTriangle} triangles generated by primes (left),
square-primes (middle) and random numbers (right).
The initial rows (not shown) contain the first one hundred primes, the first one hundred square-primes,
and one hundred integers selected randomly from $[2,550]$, respectively.
(Note that $p_{100}=541$ and $s_{100}=549$.)
The gaps are represented by two colors in the top triangles and by seven colors at the bottom.
The colors correspond to the residue classes of the gaps $\pmod 2$ and $\pmod 7$, respectively.
The triangles on the right side are obtained by two independent random choices of the numbers 
on the initial rows.
}
 \label{FigGapsPSR27}
 \end{figure}

In his extensive search for a possible counterexample of Gilbreath's conjecture
for lines as long as $3.46\times 10^{11}$ and primes less than $\pi(10^{13})$,
Odlyzko~\cite{Odl1993} found none, and he notes that similar conjectures 
are likely to be valid for many other sequences as well.

In Figure~\ref{FigGapsPSR27}, in the triangle in the upper left corner, 
the modulo $2$ highlights the left edge 
with $1$'s, but hides the real general phenomenon.
But if we `unzip the edge' and draw off the curtain, the `random pattern' reveals 
when we change the modulus
to d=4, for example. Thus, looking at the rays that traverse~\eqref{eqPGTriangle} 
parallel to the left edge,
we notice that the number of $0$'s is approximately equal to the number of $2$'s.
Indeed, in the counting summarized in Table~\ref{Table5Rays}, the cut-off triangle 
has the side length $50 000$, being generated by the first $50$ thousand prime numbers, 
and, on the first five parallel lines with the left edge, the difference between the number 
of $0$'s and the number of $2$'s satisfies the `square root rule' in all five cases,
all of them being less than $\sqrt{50\,000}\approx 223.61$.
Also, in this range, the difference between the proportion of $0$'s and the proportion of $2$'s 
is less than one percent.

%%%%%%%%%%%%%%%%%%%%%%%%%%%%%%%%%%%%%%%%
% \begin{center}
% https://sage.syzygy.ca/jupyter/user/sucodru/notebooks/SQUARE_PRIMES/SP_SQUARE-PRIMES.ipynb
\begin{table}[h]
\setlength{\tabcolsep}{12pt}
 \setlength{\extrarowheight}{0.5pt}
% \small
\caption{
% M=224737  r=1  d=4
% prime_pi(224737)=20000
% [2, 0, 2, 2, 2, 2, 2, 2, 0, 0]
% There are 19997 elements in the list.
% Counts for each element: {2: 9996, 0: 10001}
% s=19997 d+z=19997;  z/s=0.50013 d/s=0.49987  differance = 0.00025 
% 
% r & s & z & d & (z-d)/s
The frequencies of the absolute values of the differences on the rays that cross a cut-off of 
the~\eqref{eqPGTriangle} triangle passing parallel to its left edge.
The generating row contains the first 50\,000 prime numbers: $2,3,\dots,611\,953$.
All differences are reduced modulo $4$.
The notations are as follows:
$r$ is the number of the ray, starting with $r=1$, the ray next to the left edge;
$N$ is the number of differences on the ray (note that there are no differences on the first row
of~\eqref{eqPGTriangle});
$z$ is the number of zeros and $t$ is the number of two's.
% 
% Iterations of the application of $\tilde{T}_{b,h}$ on $n=1001$, with $b=10$, $h=1$, 
% and $d=4$. 
}
\centering
\footnotesize
\begin{tabular}{>{\scriptsize}lcccr}
\toprule
$r$\ \  & $N$ & $z$ & $t$ & $(z-t)/N$\\
\midrule
% \vspace*{-68mm}
% M=611953  r=1  d=4
% prime_pi(611953)=50000
% [2, 0, 2, 2, 2, 2, 2, 2, 0, 0]
% There are 49998 gaps in the list.
% Counts for each element: {2: 25084, 0: 24914}
% s=49998 d+z=49998;  z/s=0.49830 d/s=0.50170  differance = -0.00340 
% 
% 1 & 49998 & 24914 & 25084 & -0.00340 \\
1 & 49998 & 24914 & 25084 & -0.00340 \\
2 & 49997 & 25095 & 24902 & 0.00386 \\
3 & 49996 & 25033 & 24963 & 0.00140 \\
4 & 49995 & 25019 & 24976 & 0.00086 \\
5 & 49994 & 25074 & 24920 & 0.00308 \\
% \cdots & $\cdots$ & $\cdots$ & $\cdots$ & $\cdots$\\
% \hdottedline
% \hdashline[1pt/1pt]
\bottomrule
% \vspace*{-8mm}
\end{tabular}
\label{Table5Rays}
\end{table}

A similar development comes along even further, on the rays farther away to the right and still, 
analogue for larger moduli $d$, as evidenced by numerical computations.
In the simplest, bicolor version of the triangle, for $d=4$, the following statement is likely 
to hold true.
\begin{conjecture}\label{ConjectureRay}
Let $r\ge 1$ be integer and denote by $\delta_k(r)$ the $r$th element on the $k$th row of  
the~\eqref{eqPGTriangle} triangle generated by the sequence of primes.
Then, with finitely many exceptions, the sequence of differences $\{\delta_k(r)\}_{k\ge 1}\pmod 4$
contains only $0$'s and $2$'s and, in the limit, their proportions are the same being equal to $1/2$.
\end{conjecture}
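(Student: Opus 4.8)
The plan is to split the statement into its two very unequal halves—``only $0$'s and $2$'s'' and ``equal proportions $1/2$''—and dispatch them by different means. \emph{First} I would reduce modulo $2$. Since $|a-b|\equiv a+b\pmod 2$, the operator $\PG$ commutes with reduction mod $2$, so the mod-$2$ version of~\eqref{eqPGTriangle} is the linear (Pascal-type) triangle built from the primes mod $2$, i.e.\ from the sequence $(0,1,1,1,\dots)$. A one-line induction shows that this binary triangle carries $1$'s exactly on the left edge and $0$'s everywhere else. Here the rays at issue are the interior ones, $r\ge 2$ in the notation of~\eqref{eqPGTriangle} (equivalently $r\ge 1$ in the ray-numbering of Table~\ref{Table5Rays}, which starts next to the left edge); the left edge itself is the all-$1$ ray governed by Gilbreath's phenomenon. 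For every fixed interior ray one then gets $\delta_k(r)=d_r^{(k)}\equiv 0\pmod 2$ for all $k\ge 1$, i.e.\ $d_r^{(k)}\in\{0,2\}\pmod 4$, the only genuine exception being the top entry $d_r^{(0)}=p_r$. This settles the first assertion \emph{unconditionally}.

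\emph{Second}, I would halve the triangle. Because the interior values are even, set $c_r^{(k)}:=d_r^{(k)}/2$ for $k\ge 1$; the recurrence descends to $c_r^{(k+1)}=|c_{r+1}^{(k)}-c_r^{(k)}|$, so the $c$'s form their own $\PG$ triangle, and $d_r^{(k)}\equiv 2\pmod 4$ precisely when $c_r^{(k)}$ is odd. Its top row is $c_r^{(1)}=g_r/2$ with $g_r=p_{r+1}-p_r$, whose parity records whether $g_r\equiv 2$ or $g_r\equiv 0\pmod 4$. Reducing the halved triangle mod $2$ is again linear, so by Lucas' theorem the ray value at depth $m=k-1$ is the XOR $c_r^{(k)}\equiv\bigoplus_{i\,\preceq\, m}\bigl(g_{r+i}/2\bigr)\pmod 2$, the sum running over the indices $i$ whose binary digits are dominated by those of $m$. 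The density claim of the conjecture is therefore \emph{exactly} the statement that, for each fixed $r$, the $0$–$1$ sequence $m\mapsto\bigoplus_{i\preceq m}(g_{r+i}/2)$ is balanced, with limiting frequency $1/2$ for each value.

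It is clarifying to phrase this over $\FF_2$. If $B(t)=\sum_{i\ge 0}(g_{r+i}/2)\,t^i\in\FF_2[[t]]$ is the generating series of the gap-parities, a short computation with $\sum_k\binom{k}{i}t^k=t^i/(1+t)^{i+1}$ gives the ray series $\tfrac{1}{1+t}\,B\!\left(\tfrac{t}{1+t}\right)$; since $t\mapsto t/(1+t)$ is an involution over $\FF_2$, this is the ``rational in several ways'' phenomenon flagged in the abstract. Balancedness with density $1/2$ then holds automatically whenever this series is rational with a balanced period, and for structured inputs one can hope to prove it exactly: this is where the $\FF_2$ rational-function machinery of the paper applies, and where Theorem~\ref{TheoremGPSP} for square-primes should have its quantitative analogue.

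\emph{The main obstacle} is Step two for the genuine primes. The quantity $\bigoplus_{i\preceq m}(g_{r+i}/2)$ couples the distribution of prime gaps modulo $4$ with the erratic binary-domination combinatorics of the index $m$, and for the actual primes the series $B$ is not rational, so no periodicity argument is available. Proving balancedness unconditionally appears to be of the same order of difficulty as—indeed a refinement of—Gilbreath's conjecture itself; even the qualitative weakening that each interior ray contains infinitely many $0$'s \emph{and} infinitely many $2$'s is already open, paralleling the unresolved question of whether the left edge carries infinitely many $1$'s. I therefore expect an honest unconditional proof to be out of reach, and the realistic targets to be: (i)~a conditional proof assuming equidistribution of the gap-parity sequence $(g_n/2\bmod 2)$ along binary-dominated index sets; (ii)~an almost-sure, second-moment proof for a random model matching the density of the primes; and (iii)~exact, unconditional results for algebraically structured sequences such as the square-primes, delivered by the $\FF_2$ rational-function techniques that the present paper develops.
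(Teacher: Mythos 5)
You were asked to prove Conjecture~\ref{ConjectureRay}, and the first thing to say is that the paper itself does not prove it: it is stated as a conjecture, supported only by the numerical evidence of Table~\ref{Table5Rays} (and by Odlyzko-style computations cited nearby). So there is no ``paper's own proof'' to compare your attempt against, and your closing assessment --- that the density claim is genuinely open and likely of comparable depth to Gilbreath's conjecture --- matches the actual status of the statement.

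Within that constraint, the partial content of your proposal is correct, and it in fact establishes more than the paper does. Your first step is sound and unconditional: since $|a-b|\equiv a+b\pmod 2$, the operator $\PG$ linearizes mod $2$; the gap row is $(1,0,0,0,\dots)$ mod $2$ because $p_2-p_1=1$ while all later gaps are differences of odd primes; and the induction ``first entry odd, all remaining entries even'' propagates to every difference row. Hence every interior ray lies in $\{0,2\}$ mod $4$ (with no exceptions at all once the top row of primes is excluded, as in the ray-numbering of Table~\ref{Table5Rays}), so the first assertion of the conjecture is actually a theorem. Your halving step is also legitimate: the interior entries, divided by $2$, satisfy the same recurrence, and Lucas' theorem gives $c_r^{(k)}\equiv\bigoplus_{i\preceq k-1}\bigl(g_{r+i}/2\bigr)\pmod 2$, so the density claim is exactly the balancedness of this XOR sequence; the identity expressing the ray series as $\tfrac{1}{1+t}B\bigl(\tfrac{t}{1+t}\bigr)$ with the involution $t\mapsto t/(1+t)$ over $\FF_2$ is correct and connects cleanly to the rational-function framework of Theorems~\ref{TheoremSeriesFP}, \ref{TheoremRationalFunctions1} and~\ref{TheoremRationalFunctions2}. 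The one genuine gap is the one you name yourself: nothing in your argument, and nothing in the paper, controls the prime gaps mod $4$ along binary-dominated index sets, and the paper's machinery cannot close this because it applies only to eventually periodic (equivalently, rational) series, which the prime-gap series is not. So your proposal is not a proof of the conjecture, but no such proof exists; what you have proved (the $\{0,2\}$ restriction) and what you have reduced the rest to (balancedness of an explicit $\FF_2$ sequence) are both correct and are useful sharpenings of the conjecture as stated.
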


Our object in the following is to characterize the infinite sequences of integers that produce
triangles with periodic patterns.
We remark that Fibonacci's sequence has the property of reproducing itself 
on the next line of a~\eqref{eqPGTriangle} triangle. We may say that it is a fixed point of 
the Proth-Gilbreath operator.
Also, triangles generated by Fibonacci sequences reveal periodic features when 
their entries are reduced modulo some $d\ge 2$.
We will investigate slightly more complex shapes and obtain a general characterization of 
triangles that are not fully periodic.  
For this purpose we introduce an equivalence relation~``$\aRb$'' whose quotient set is indeed 
composed only of periodic classes.
Our main result is the following characterization of binary sequences that are fixed points 
of the $\PG$ operator.

We say that a row in~\eqref{eqPGTriangle} is \textit{ultimately replicated identically} into another, 
if cutting the entries at their beginnings, not necessarily in the same number,
the two remaining sequences of numbers on the two rows are identical.
%%%%%%%%%%%%%%%%%%%%%%%%%%%%%%%%%%%%%%
\begin{theorem}\label{TheoremSeriesFP}
   Let $\balpha=(a_0,a_1,a_2,\dots)$ be the sequence of entries on a line of 
the~\eqref{eqPGTriangle} triangle and
let $\phi(\balpha)=\sum_{k\ge 0}a_k X^k$ be its associated formal power series. 
Suppose $a_k\in \FF_2$ for $k\ge 0$.
Then $\balpha$ is ultimately replicated identically in the next line of~\eqref{eqPGTriangle} 
if and only if 
there exist an integer $r\ge 0$ and a polynomial $P(X)\in\FF_2[X]$
such that either 
\begin{equation}\label{eqTheorem2}
\begin{split}
 \phi(\balpha) = \sdfrac{P(X)}{1+X+X^r}\ \ 
      \mathrm{ or }\ \  
 \phi(\balpha) = \sdfrac{P(X)}{X^r(1+X)+1}\,.   
\end{split}    
\end{equation}
\end{theorem}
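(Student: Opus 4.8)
The plan is to pass to formal power series and convert the whole statement into an identity in $\FF_2[[X]]$. First I would record what $\PG$ does over $\FF_2$: since $|a-b|\equiv a+b\pmod 2$, the operator simply sends $(a_0,a_1,\dots)$ to $(a_0+a_1,\,a_1+a_2,\dots)$. Writing $\phi=\phi(\balpha)$, letting $a_0$ be its constant term, and writing $\psi$ for the series attached to the next line, a one-line computation gives
\begin{equation*}
 \psi = \phi(\PG\balpha) = \sdfrac{(1+X)\,\phi+a_0}{X}\,,
\end{equation*}
where the subtracted (equivalently, over $\FF_2$, added) constant makes the numerator divisible by $X$, so that $\psi\in\FF_2[[X]]$ as it must. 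Thus the Proth-Gilbreath step is, up to one shift and one constant correction, multiplication by $1+X$.

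The second ingredient is a clean algebraic description of ``ultimately replicated identically''. I claim that for $\varphi,\varrho\in\FF_2[[X]]$ the two coefficient sequences coincide after deleting (possibly unequal) initial segments precisely when $X^a\varphi+X^b\varrho\in\FF_2[X]$ for some integers $a,b\ge 0$. One direction is immediate: deleting $m$ terms from $\varphi$ and $n$ from $\varrho$ and equating the resulting tails, then clearing the denominators $X^m,X^n$, leaves $X^n\varphi+X^m\varrho$ equal to a polynomial. Conversely, if $X^a\varphi+X^b\varrho$ is a polynomial then $X^a\varphi$ and $X^b\varrho$ have equal coefficients in all sufficiently large degrees, which is exactly the assertion that a tail of $\varphi$ agrees with a shifted tail of $\varrho$. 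Hence $\balpha$ is ultimately replicated identically in the next line if and only if $X^a\phi+X^b\psi\in\FF_2[X]$ for some $a,b\ge0$.

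Now I would substitute the formula for $\psi$. Using $-1=+1$ in $\FF_2$, clearing the single $X$ in the denominator, and absorbing the extra constant into the polynomial, the membership $X^a\phi+X^b\psi\in\FF_2[X]$ collapses (for $b\ge1$) to
\begin{equation*}
 \phi\cdot\big(X^a+X^{b-1}+X^b\big)\in\FF_2[X]\,,
\end{equation*}
with the boundary value $b=0$ treated separately and yielding $\phi\cdot(1+X+X^{a+1})\in\FF_2[X]$. It remains to read off the denominator. Factoring the lowest power of $X$ out of $D:=X^a+X^{b-1}+X^b$ and using that the resulting ``odd part'' $E$ has constant term $1$, hence is a unit in $\FF_2[[X]]$, the relation $\phi D\in\FF_2[X]$ forces $\phi=P/E$ with $P\in\FF_2[X]$. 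A short case check on the three exponents $a,b-1,b$ gives exactly three outcomes: if $a<b-1$ then $E=1+X^s+X^{s+1}$ with $s=b-1-a\ge1$, the second form with $r=s$; if $a>b$ then $E=1+X+X^t$ with $t=a-b+1\ge2$, the first form with $r=t$; and if two of the exponents coincide they cancel over $\FF_2$, so $D$ is a pure power $X^c$ and $\phi$ is a polynomial, which is the shared case $r=0$ of either form ($P/X$ with $X\mid P$). The converse direction is the same computation run backwards: starting from $\phi$ of one of the two shapes, multiplying out the denominator and re-inserting $(1+X)\phi=X\psi+a_0$ produces a polynomial combination $X^a\phi+X^b\psi$, which by the criterion returns the replication property.

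The only genuinely delicate point, and the step I would write out most carefully, is the exponent bookkeeping at the end: verifying that the case analysis on $(a,b)$ is exhaustive, that it produces \emph{exactly} the two advertised denominators and nothing more, and that the degenerate equal-exponent cases fold cleanly into $r=0$. Everything else—the $\PG$-to-power-series dictionary and the criterion for ultimate replication—is routine once the reduction $|a-b|\equiv a+b$ over $\FF_2$ is in hand.
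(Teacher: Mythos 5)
Your proposal is correct and follows essentially the same route as the paper: the same power-series formula for the Proth--Gilbreath step over $\FF_2$, the same polynomial-tail criterion for ultimate replication (the paper's Lemma~1, stated there with a one-sided shift but equivalent to your two-shift version), and the same algebraic reduction to the two denominators $1+X+X^r$ and $X^r(1+X)+1$. Your explicit case analysis on the exponents $a$, $b-1$, $b$, with the degenerate collisions folding into the $r=0$ case, is just a more carefully spelled-out version of what the paper compresses into its split between $r=0$ and $r\ge 1$.
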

%%%%%%%%%%%%%%%%%%%%%%%%%%%%%%%%%%%%%%

As an application, we draw out the following two results that link certain
formal power series over $\FF_2$, and their representations as rational functions.

\begin{theorem}\label{TheoremRationalFunctions1}
Let $f(X)$ be a formal power series with coefficients in $\FF_2$. 
Suppose there exists a polynomial $P(X) \in \FF_2[X]$ and an integer $r\geq 1$ such that $f(X)$ can be
expressed as the rational function 
\begin{equation*}
\begin{split}
 f(X)=\frac{P(X)}{1+X+X^r}\ \ 
      \mathrm{ or }\ \  
 f(X)=\frac{P(X)}{X^r(1+X)+1}\,.   
\end{split}    
\end{equation*}
Then, for any $l\geq 1$, there exists a polynomial $P_l(X) \in \FF_2[X]$ 
and an integer $r_l\geq 1$ such that either
\begin{equation*}
%          \mathrm{either }\  \  
         f(X) = \frac{P_l(X)}{(1+X)^l+X^{r_l}}\ \ 
      \mathrm{ or }\ \  
      f(X) = \frac{P_l(X)}{X^{r_l}(1+X)^l+1}. 
\end{equation*}

\end{theorem}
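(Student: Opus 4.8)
The plan is to reduce Theorem~\ref{TheoremRationalFunctions1} to a purely algebraic manipulation of the two denominator families over $\FF_2$, working in the fraction field $\FF_2(X)$ where all the power series in question live. Since $f(X)$ is given as $P(X)/(1+X+X^r)$ or $P(X)/(X^r(1+X)+1)$, the target is to show that the same rational function can be rewritten with denominator $(1+X)^l+X^{r_l}$ or $X^{r_l}(1+X)^l+1$ for every $l\ge 1$. The natural first step is to observe that multiplying numerator and denominator by a common factor does not change the rational function, so it suffices to exhibit, for each $l$, a polynomial $Q(X)$ such that $(1+X+X^r)\cdot Q(X)$ has the form $(1+X)^l+X^{r_l}$ (up to the obvious swap of the two families), and then set $P_l(X)=P(X)\cdot Q(X)$.

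First I would treat the base case $l=1$, where the denominator $1+X+X^r$ already equals $(1+X)^1+X^r$, so nothing is needed and $r_1=r$. The heart of the matter is the inductive step: assuming the denominator can be written as $(1+X)^l+X^{m}$ for some exponent $m$, I want to pass to $(1+X)^{l+1}+X^{m'}$. The key identity over $\FF_2$ is that squaring the Frobenius-type expression doubles exponents, since $\big((1+X)^l+X^m\big)^2=(1+X)^{2l}+X^{2m}$ because the cross term $2(1+X)^lX^m$ vanishes in characteristic $2$. This lets me jump from exponent $l$ to $2l$ cheaply, but to climb by single steps I would instead multiply the denominator by a carefully chosen linear-ish factor. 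Concretely, I expect the useful move is to compute $\big((1+X)^l+X^m\big)\cdot(1+X)$ and rearrange: $(1+X)^{l+1}+X^m(1+X)=(1+X)^{l+1}+X^m+X^{m+1}$, which is not yet of the desired two-term shape, so the real work is in choosing the multiplier so that all but two monomials cancel modulo $2$.

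The cleanest route, which I would pursue, exploits the squaring identity together with the elementary factorization $1+X^{2^k}=(1+X)^{2^k}$ over $\FF_2$. Starting from $1+X+X^r$, repeated squaring produces denominators $1+X^{2^j}+X^{2^j r}$, and by grouping $1+X^{2^j}=(1+X)^{2^j}$ one reaches the form $(1+X)^{2^j}+X^{2^j r}$, which handles all $l$ that are powers of two with $r_l=2^jr$. To fill in the intermediate values of $l$, I would multiply by suitable powers of $(1+X)$ and absorb the resulting extra terms into the numerator, invoking that any such multiplication is legitimate because we are free to enlarge $P_l$. The swap between the two stated forms $(1+X)^l+X^{r_l}$ and $X^{r_l}(1+X)^l+1$ corresponds to the substitution $X\mapsto 1/X$ followed by clearing denominators, exactly as the two original forms in~\eqref{eqTheorem2} are related, so I would handle one form and obtain the other by symmetry.

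The main obstacle I anticipate is producing, for \emph{every} $l$ and not merely powers of two, an explicit multiplier $Q(X)\in\FF_2[X]$ that turns $1+X+X^r$ into a genuine two-term denominator $(1+X)^l+X^{r_l}$; the squaring trick gives the powers of two for free, but the odd and intermediate values of $l$ require an honest telescoping cancellation over $\FF_2$, and verifying that the chosen $Q$ leaves exactly two surviving monomials is where the combinatorial bookkeeping of binomial coefficients modulo $2$ (Lucas' theorem) will have to be invoked carefully. Once that cancellation lemma is in place, the rest of the argument is a direct substitution, and the theorem follows by setting $P_l=P\cdot Q$ and reading off $r_l$ from the exponent bookkeeping.
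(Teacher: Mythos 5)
Your proposal has a genuine gap at exactly the point you flag as the ``main obstacle'': you establish the two-term denominator $(1+X)^l+X^{r_l}$ only for $l$ a power of two (via Frobenius squaring together with $1+X^{2^j}=(1+X)^{2^j}$), and the device you suggest for the remaining values of $l$ --- multiplying by powers of $(1+X)$ and ``absorbing the resulting extra terms into the numerator'' --- cannot work as described. If the denominator is $(1+X)^l+X^m$ and you multiply numerator and denominator by $(1+X)$, the new denominator is $(1+X)^{l+1}+X^m+X^{m+1}$; the unwanted monomial lives in the \emph{denominator}, and no legitimate operation moves it into the numerator while preserving the rational function. So the step-by-step climb fails, and the ``cancellation lemma'' you defer to Lucas-type bookkeeping is precisely the unproved content of the theorem for general $l$.

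The gap is easy to close, and far more simply than with binomial coefficient combinatorics: work modulo the given denominator. In $\FF_2[X]/(1+X+X^r)$ one has $1+X\equiv X^r$, hence $(1+X)^l\equiv X^{rl}$ for every $l\ge 1$; that is, $1+X+X^r$ divides $(1+X)^l+X^{rl}$ in $\FF_2[X]$. Writing $(1+X)^l+X^{rl}=(1+X+X^r)\,Q(X)$ gives
\begin{equation*}
   f(X)=\frac{P(X)}{1+X+X^r}=\frac{P(X)\,Q(X)}{(1+X)^l+X^{rl}}\,,
\end{equation*}
so you may take $P_l=P\,Q$ and $r_l=rl$. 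The second family is identical: modulo $X^r(1+X)+1$ one has $X^r(1+X)\equiv 1$, hence $X^{rl}(1+X)^l+1\equiv 0$, and the same multiplication argument applies; your $X\mapsto 1/X$ symmetry remark is then unnecessary. Note that this repaired argument is a genuinely different route from the paper's: the paper deduces Theorem~\ref{TheoremRationalFunctions1} from its characterization of $l$-leap fixed points (Theorem~\ref{TheoremSeriesFP2}), observing that a row whose equivalence class is fixed by $\widehat{\Psi}$ is automatically fixed by every iterate $\widehat{\Psi}^{[l]}$, and then converting back to the rational form. The algebraic route is self-contained and more elementary; the paper's route explains the identity conceptually through the quasi-periodicity of the triangle.
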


\begin{theorem}\label{TheoremRationalFunctions2}
Let $f(X)$ be a formal power series with coefficients in $\FF_2$. 
Suppose there exist $m\ge 1$ polynomials $P_1(X), P_2(X),\dots,P_m(X)\in \FF_2[X]$ 
and two sets of $m$ positive integers $r_1, r_2,\dots, r_m$ and $l_1,l_2,\dots,l_m$ such that 
either
\begin{equation*}
%          \mathrm{either }\  \  
         f(X)=\frac{P_j(X)}{(1+X)^{l_j} + X^{r_j}}\ \ 
      \mathrm{ or }\ \  
      f(X)=\frac{P_j(X)}{X^{r_j}(1+X)^{l_j} + 1},   
\end{equation*}
for any $1\le j\le m$. 
Let $l=\gcd(l_1,\dots,l_m)$.
Then, there exists a polynomial $P(X) \in \FF_2[X]$ and an integer $r \geq 1$ such that either
\begin{equation*}
%          \mathrm{either }\  \  
         f(X) = \frac{P(X)}{(1+X)^l+X^r}\ \ 
      \mathrm{ or }\ \  
      f(X) = \frac{P(X)}{X^r(1+X)^l+1}. 
\end{equation*}
\end{theorem}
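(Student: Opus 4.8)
The plan is to reduce Theorem~\ref{TheoremRationalFunctions2} to the single-denominator case handled by Theorem~\ref{TheoremRationalFunctions1} by combining the $m$ rational representations into one whose denominator involves $(1+X)^{\ell}$ with $\ell=\gcd(\ell_1,\dots,\ell_m)$. First I would work entirely inside the field of fractions $\FF_2(X)$ and observe that the two stated forms are the same object read in opposite coordinates: writing $Y=1+X$, a denominator $Y^{\ell_j}+X^{r_j}$ and a denominator $X^{r_j}Y^{\ell_j}+1$ are interchanged by the substitution $X\mapsto X^{-1}$ together with clearing powers of $X$, so it suffices to treat one family and then translate. Concretely, I would normalize every hypothesis to the first form $f=P_j/\bigl((1+X)^{\ell_j}+X^{r_j}\bigr)$, allowing the numerators to absorb the discrepancy, and record that $f$ is a fixed rational function that simultaneously admits all these representations.

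The arithmetic core is a $\gcd$-manipulation in the commutative ring $\FF_2[Y]$ (equivalently $\FF_2[X]$, since $X=1+Y$ is a unit change of variable). Because $\ell=\gcd(\ell_1,\dots,\ell_m)$, by Bézout there are integers realizing $\ell$ as an integer combination of the $\ell_j$; the difficulty is that Bézout coefficients can be negative while exponents must stay nonnegative. The plan is to circumvent this by clearing all denominators: from $f\cdot\bigl((1+X)^{\ell_j}+X^{r_j}\bigr)=P_j(X)$ I would derive, for each pair $j,j'$, the polynomial identity
\begin{equation*}
 P_{j'}(X)\bigl((1+X)^{\ell_j}+X^{r_j}\bigr)=P_j(X)\bigl((1+X)^{\ell_{j'}}+X^{r_{j'}}\bigr),
\end{equation*}
which holds in $\FF_2[X]$ and encodes the compatibility of the representations. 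Viewing these as relations among the denominators $D_j:=(1+X)^{\ell_j}+X^{r_j}$, I would show that $f$, as an element of $\FF_2(X)$, has a reduced denominator $D$ whose $(1+X)$-adic structure is controlled by $\ell$: the exact power of $(1+X)$ dividing the reduced denominator is determined compatibly by all $D_j$, and the minimal consistent exponent is $\ell$. This is where Theorem~\ref{TheoremRationalFunctions1} enters in reverse — it tells us that the exponent on $(1+X)$ in an admissible denominator can always be raised, so the content of the present theorem is that it can be lowered exactly to the $\gcd$.

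The main obstacle, I expect, is producing a denominator of the precise shape $(1+X)^{\ell}+X^{r}$ (a \emph{binomial} in the two generators $1+X$ and $X$) rather than merely some denominator divisible by $(1+X)^{\ell}$. To handle this I would argue structurally: since each $D_j=(1+X)^{\ell_j}+X^{r_j}$ and $\ell\mid\ell_j$, I would factor out $(1+X)^{\ell}$ formally by writing $(1+X)^{\ell_j}=\bigl((1+X)^{\ell}\bigr)^{\ell_j/\ell}$ and treat $U:=(1+X)^{\ell}$ as a new variable, reducing the claim to the case $\ell=1$, i.e.\ to showing that if $f$ is simultaneously represented with denominators $U^{a}+X^{r}$ for several $a\ge 1$ with $\gcd$ of the $a$'s equal to $1$, then it is represented with denominator $U+X^{r'}$. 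That base case is then exactly the statement of Theorem~\ref{TheoremRationalFunctions1} run with $U$ in place of $1+X$, so the final step is to check that the substitution $(1+X)\mapsto U$ is compatible with both normal forms and with the $X\mapsto X^{-1}$ duality used to pass between the two alternatives, after which the conclusion for $\ell$ follows by unwinding $U=(1+X)^{\ell}$ and choosing $P$ and $r$ accordingly.
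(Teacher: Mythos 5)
Your proposal has a genuine circularity at the decisive step. After setting $U=(1+X)^{l}$, the statement you still need is: if $f$ admits representations with denominators $U^{a_j}+X^{\rho_j}$ where $\gcd(a_1,\dots,a_m)=1$, then $f$ admits a representation with denominator $U+X^{\rho}$. That statement is \emph{not} ``Theorem~\ref{TheoremRationalFunctions1} run with $U$ in place of $1+X$''; it is precisely Theorem~\ref{TheoremRationalFunctions2} itself in the case $l=1$. Theorem~\ref{TheoremRationalFunctions1} moves only in the opposite direction: from a representation with exponent $1$ it produces representations with arbitrary exponent $l$ (you concede this when you say it ``enters in reverse''), so it can raise exponents but never lower them --- and lowering to the gcd is the entire content of the theorem. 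Nothing in your sketch performs this descent. Two auxiliary steps are also unsound. First, the normalization of every hypothesis to the first form via $X\mapsto X^{-1}$: that substitution interchanges the two denominator families as abstract polynomials (they are reciprocals of one another), but it does not convert a representation of the fixed series $f(X)$ into a representation of the \emph{same} series $f(X)$; it produces a statement about a different expansion, so the two alternatives cannot be merged this way (and the hypothesis deliberately allows the alternative to vary with $j$). Second, tracking ``the exact power of $(1+X)$ dividing the reduced denominator'' is vacuous: every admissible denominator takes the value $1$ at $X=1$, so $1+X$ divides none of them; the exponent $l$ is not a $(1+X)$-adic valuation of anything.

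For contrast, the paper proves the theorem on the dynamical side, where the gcd descent is trivial. By Theorem~\ref{TheoremSeriesFP2}, each hypothesis (either alternative, with exponent $l_j$) is equivalent to the single dynamical statement that the row $\balpha$ with $\phi(\balpha)=f$ satisfies $\widehat{\Psi}^{[l_j]}(\widehat{\balpha})=\widehat{\balpha}$, i.e.\ $l_j$ is a vertical period of the class of $\balpha$ in the triangle; the disjunction of the two algebraic forms corresponds to one dynamical condition, which is exactly why the paper never needs your normalization. The set of periods is closed under differences, since for periods $a>b$ one has $\widehat{\Psi}^{[a-b]}(\widehat{\balpha})=\widehat{\Psi}^{[a-b]}\big(\widehat{\Psi}^{[b]}(\widehat{\balpha})\big)=\widehat{\Psi}^{[a]}(\widehat{\balpha})=\widehat{\balpha}$, so the Euclidean algorithm shows $l=\gcd(l_1,\dots,l_m)$ is again a period, and one application of Theorem~\ref{TheoremSeriesFP2} in the reverse direction gives the conclusion. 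If you insist on a purely algebraic route close to your sketch, the missing ingredient is B\'ezout \emph{in the exponents}, applied at the roots: write $l=\sum_j c_j l_j$ with $c_j\in\ZZ$; every root $\eta$ of the reduced denominator of $f$ lies in some $\FF_{2^d}^{\times}$, is distinct from $0$ and $1$, and satisfies $(1+\eta)^{l_j}=\eta^{\varepsilon_j r_j}$ with $\varepsilon_j=\pm 1$ according to which alternative holds at index $j$; hence $(1+\eta)^{l}=\eta^{R}$ for the single integer $R=\sum_j c_j\varepsilon_j r_j$, and choosing $r\ge 1$ with $r\equiv R\pmod{2^d-1}$ makes every such $\eta$ a root of $(1+X)^{l}+X^{r}$. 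What then remains is the multiplicity bookkeeping (these denominators are $2^a$-th powers of squarefree polynomials), which your proposal also leaves undone.
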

\medskip

Theorem~\ref{TheoremRationalFunctions2} covers a multitude of situations, some of them describing
patterns of a certain complexity. 
To give such an example, let us consider the set
of integers
\begin{equation*}
\small
   \begin{split}
   \cM =\{
   & 1, 2, 3, 4, 5, 8, 10, 12, 13, 14, 17, 18, 20, 24, 27, 28, 29, 30, 34, 36, 41, 42, 48, \\
   &55, 56, 57, 58, 59, 60, 61, 63, 65, 67, 70, 71, 74, 75, 76, 78, 79, 80, 82, 85, 87, 88, \\
   &92, 93, 95, 96, 97, 98, 100, 101, 103, 105, 106, 108, 109, 112, 115, 119, 120, 121, 126
   \}\,.
   \end{split}   
\end{equation*}
Let $f(X)\in\FF_2[[X]]$ be the formal power series with coefficients in the field 
with two elements defined by
\begin{equation}\label{eqfdeX}
   f(X) = \sum_{k\ge 0}\sum_{s\in\cM}X^{s+127k}.
\end{equation}
The coefficients of $f(X)$ repeat with a period of length $127$ and the graph of the first period
is shown in Figure~\ref{FigureCoefficientsOff}.

%%%%%%%%%%%%%%%%%%%%%%%%%%%%%%%%%%%%%%%%%%%%%%%%%%%%%%%%%%%%%%%%%%%%
% https://sage.syzygy.ca/jupyter/user/sucodru/notebooks/SQUARE_PRIMES/SP_SQUARE-PRIMES.ipynb

\begin{figure}[t]
 \centering
    \includegraphics[width=0.98\textwidth]{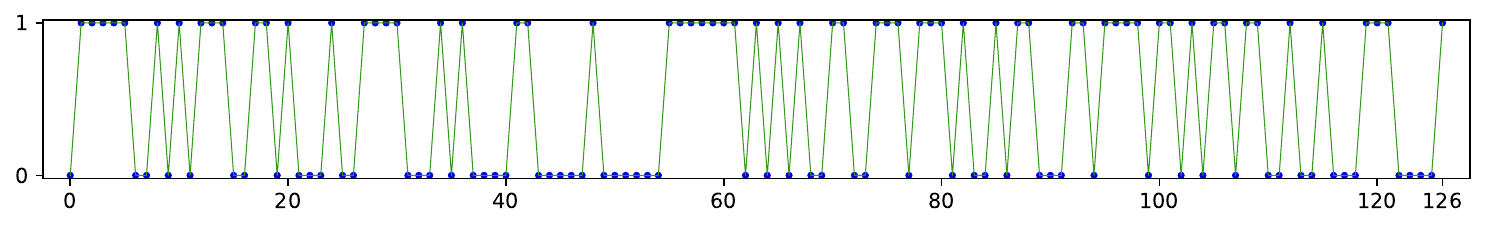}
\caption{
 The coefficients of the series $f(X)$. 
 The graph shows the first $127$ coefficients, and the following ones are reproduced periodically 
 with the period $127$.
There are $64$ non-zero coefficients among the first $127$.
}
 \label{FigureCoefficientsOff}
 \end{figure}
%%%%%%%%%%%%%%%%%%%%%%%%%%%%%%%%%%%%%%%%%%%%%%%%%%%%%%%%%%%%%%%%%%%%
Now, on the one hand, observe that
\begin{equation*}%\label{eqPolyP1}
   \begin{split}
    \big((1+X)^3+X^{21}\big)f(X) = 
      X +  X^{3} +  X^{6} +  X^{9} +  X^{13} +  X^{14} +  X^{15} +  X^{20},
   \end{split}	
\end{equation*}
so that
\begin{equation}\label{eqPolyP1}
   \begin{split}
    f(X) = 
    \frac{X +  X^{3} +  X^{6} +  X^{9} +  X^{13} +  X^{14} +  X^{15} +  X^{20}}{(1+X)^3+X^{21}}\,.
   \end{split}	
\end{equation}
On the other hand, note that
\begin{equation*}%\label{eqPolyP2}
   \begin{split}
   \big((1+X)^2+X^{14}\big)f(X) = 
      X +  X^{2} +  X^{6} +  X^{7} +  X^{8} +  X^{13},
   \end{split}	
\end{equation*}
therefore
\begin{equation}\label{eqPolyP2}
   \begin{split}
   f(X) = 
   \frac{X +  X^{2} +  X^{6} +  X^{7} +  X^{8} +  X^{13}}{(1+X)^2+X^{14}}\,.
   \end{split}	
\end{equation}

Then, the hypotheses of Theorem~\ref{TheoremRationalFunctions2} are satisfied with
the parameters suggested from~\eqref{eqPolyP1} and~\eqref{eqPolyP2}:
$m=2$; $l_1=3$, $r_1=21$,
$P_1(X) = X +  X^{3} +  X^{6} +  X^{9} +  X^{13} +  X^{14} +  X^{15} +  X^{20}$;
$l_2=2$, $r_2=14$,
$P_2(X) =  X +  X^{2} +  X^{6} +  X^{7} +  X^{8} +  X^{13}$.
Consequently, $f(X)$ must also have a simpler expression, which it does.
Indeed, with $1=\gcd(2,3)$, $r=7$ and $P(X)=X+X^6$, we do have
\begin{equation*}%\label{eqPolyP}
   \begin{split}
   f(X) = \frac{X(1+X^5)}{1+X+X^7}\,,
   \end{split}	
\end{equation*}
which is the first type of rational function in the conclusion of Theorem~\ref{TheoremRationalFunctions2}.

\smallskip

The rest of the paper is organized as follows.
We start by discussing in Section~\ref{SectionFibonacci} the patterns generated by 
the $\PG$ operator applied to the sequence of powers of $2$ and to Fibonacci sequences.
In Section~\ref{SectionEquivalenceRelation} we introduce a relation according to which 
two rows of a table built with the iteration of the $\PG$ operator are equivalent 
if they coincide except for at most a finite number of numbers on them, 
and then we prove Theorem~\ref{TheoremSeriesFP}.
In Sections~\ref{SectionFP} and~\ref{SectionLFP} we address the relation between the (leap-)fixed points  
of the operator $\PG$ and the formal power series over $\FF_2$,
and then we prove Theorems~\ref{TheoremRationalFunctions1} and~\ref{TheoremRationalFunctions2} in Section~\ref{SubsectionProofTheorem12}.
We conclude with the presentation of some suitable examples in the last section.

%%%%%%%%%%%%%%%%%%%%%%%%%%%%%%%%%%%%%%%%%%%%%%%%%%%%%%%%%
\section{Fibonacci sequences and Proth-Gilbreath's operator}\label{SectionFibonacci}
Let $a,b\ge 0$ be the first two integers on the first row of the~\eqref{eqPGTriangle} triangle.
If we want the first line to be reproduced on the second line, then the third element has 
to coincide with $|b-a|$, that is, either with $b-a$ or with $-b+a$.
If $a\le b$, and we also assume this increasing order of the entries that follow,
we find that the numbers on the first row are:
$a$, $a2^1$, $a2^2,\dots$ 
Then, this line is a \textit{fixed point} of the Proth-Gilbreath operator.
Note that the triangle would be perfectly flat if $a=0$.

If the ordering condition is not apriori required, but instead the choice of entries that follow to the right asks that the numbers be bounded, sooner or later a periodic sequence will emerge,
maybe except for a few terms at the left end.

A combination of the two types, periodic and interspersed with $a2^k$'s, with $k$ unlimited,
develops if the size bounding condition is no longer imposed.
Any such line is a fixed point of the $\PG$ operator and 
they all reduce to periodic patterns if their entries are taken modulo~$d$, 
like the one in Figure~\ref{FigPowFibonacci} (left).
%%%%%%%%%%%%%%%%%%%%%%%%%%%%%%%%%%%%%%%%%%%%%%%%%%%%%%%%%%%%%%%%%%%%
% https://sage.syzygy.ca/jupyter/user/sucodru/notebooks/SQUARE_PRIMES/SP_SQUARE-PRIMES.ipynb

\begin{figure}[ht]
 \centering
    \includegraphics[width=0.48\textwidth]{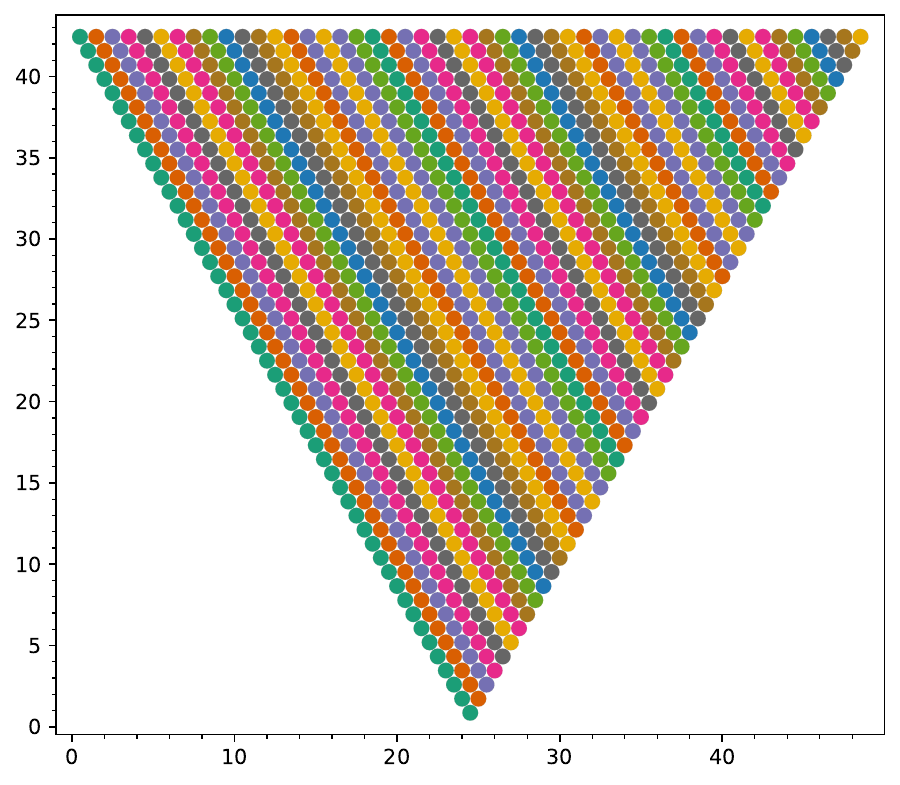}
    \includegraphics[width=0.48\textwidth]{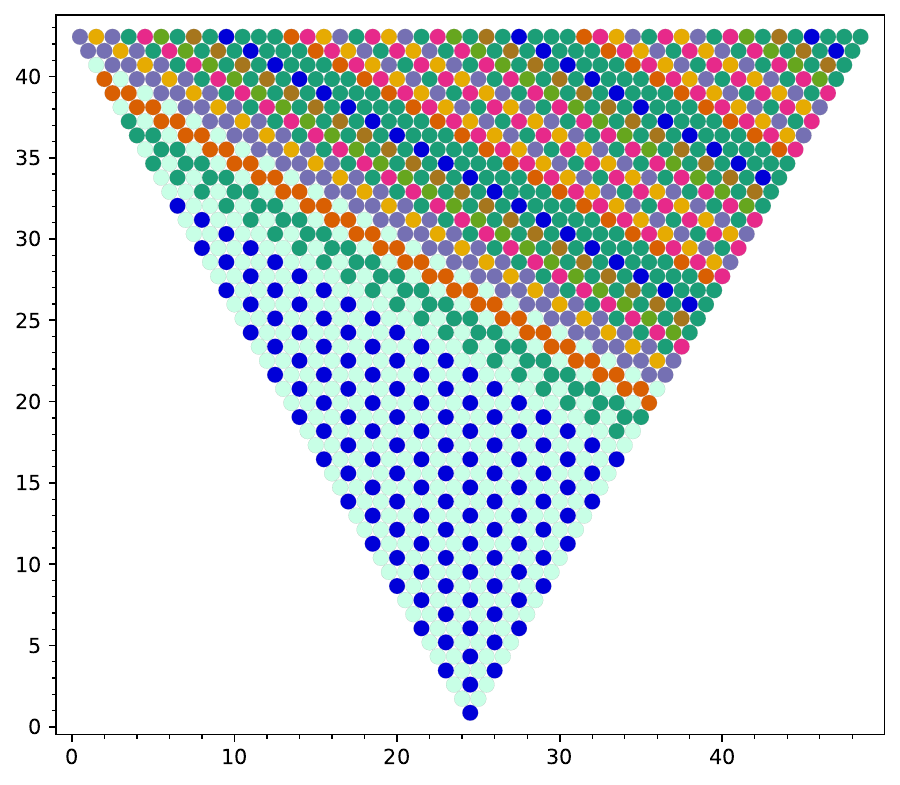}
\caption{
% modulo 19
% [1, 2, 4, 8, 16, 32, 64, 128, 256,
% % 
% [15, 7, 22, 29, 51, 80, 131, 211, 342, 
% 
% The left triangle has on the first row the powers of $2$ starting with $1,2,4,8,\dots$, and
% the right triangle has on the first row the terms of the Fibonacci string with the initial 
% parameters $15$ and $7$.
% 
Periodic patterns in~\eqref{eqPGTriangle} triangles.
The left triangle has on the first row the powers of $2$ starting with $1,2,4,8,\dots$, and
the right triangle has on the first row the terms of the Fibonacci sequence with the initial 
parameters $15$ and $7$.
In both images, the colors represent the residue classes modulo $19$ of all entries.
}
 \label{FigPowFibonacci}
 \end{figure}
%%%%%%%%%%%%%%%%%%%%%%%%%%%%%%%%%%%%%%%%%%%%%%%%%%%%%%%%%%%%%%%%%%%%%%

An augmented pattern is produced with the recursive Fibonacci rule
$F_{k-1}+F_{k} = F_{k+1}$. The Proth-Gilbreath operator transforms a Fibonacci sequence
into a shifted version: 
\begin{equation*}\label{eqFibonacciRow}
   \begin{tabular}{ccccccccccccccccccccc}%\label{eqFabcd}
 & $F_{s}$ && $F_{s+1}$ && $F_{s+2}$ && $F_{s+3}$ && $F_{s+4}$ & $\dots$ &\\[4mm]
   & &  $F_{s-1}$ &&  $F_{s}$ &&  $F_{s+1}$ &&  $F_{s+2}$ &&  $F_{s+3}$   & $\dots$ \\[1mm]
  \end{tabular}
\end{equation*}
Each repeated application of the operator adds a new number to the left side 
and shifts the entire row to the right.
Thus, depending on the hypothesis assumed with the starting parameters on the left, a new triangle
with a different periodic pattern grows attached to the left of the~\eqref{eqPGTriangle} triangle,
a triangle like the one in Figure~\ref{FigPowFibonacci} (right).
Another numerical example is
{\setlength{\tabcolsep}{3.5pt}
\newcommand{\ze}{{\color{blue}\textbf{0}}}
\newcommand{\un}{{\color{red}\textbf{1}}}
\small
\begin{equation*}
   \begin{tabular}{ccccccccccccccccccccccc}%\label{eqFabcd}
 3 && 1 && 4 && 5 && 9 && 14 && 23 && 37 && 60 && 97 && 157 && $\dots$\\[1.5mm]
  & 2 && 3 && 1 && 4 && 5 && 9 && 14 && 23 && 37 && 60 && $\dots$\\[1.5mm]
  && \un && 2 && 3 && 1 && 4 && 5 && 9 && 14 && 23 &&  $\dots$\\[1.5mm]
  &&& \un && \un && 2 && 3 && 1 && 4 && 5 && 9 &&  $\dots$\\[1.5mm]
  &&&& \ze && \un && \un && 2 && 3 && 1 && 4 &&  $\dots$\\[1.5mm]
  &&&&& \un && \ze && \un && \un && 2 && 3 &&  $\dots$\\[1.5mm]
  &&&&&& \un && \un && \ze && \un && \un &&  $\dots$\\[1.5mm]
  &&&&&&& \ze && \un && \un && \ze &&  $\dots$\\[1.5mm]
  &&&&&&&& \un && \ze && \un &&  $\dots$\\[1.5mm]
  &&&&&&&&& \un && \un &&  $\dots$\\[1.5mm]
  &&&&&&&&&& \ze &&  $\dots$\\[1.5mm]   
%    3 && 1 && 4 && 5 && 9 && 14 && 23 && 37 && 60 && 97 && 157 && $\dots$\\[1.5mm]
%   & 2 && 3 && 1 && 4 && 5 && 9 && 14 && 23 && 37 && 60 && $\dots$\\[1.5mm]
%   && 1 && 2 && 3 && 1 && 4 && 5 && 9 && 14 && 23 &&  $\dots$\\[1.5mm]
%   &&& 1 && 1 && 2 && 3 && 1 && 4 && 5 && 9 &&  $\dots$\\[1.5mm]
%   &&&& 0 && 1 && 1 && 2 && 3 && 1 && 4 &&  $\dots$\\[1.5mm]
%   &&&&& 1 && 0 && 1 && 1 && 2 && 3 &&  $\dots$\\[1.5mm]
%   &&&&&& 1 && 1 && 0 && 1 && 1 &&  $\dots$\\[1.5mm]
%   &&&&&&& 0 && 1 && 1 && 0 &&  $\dots$\\[1.5mm]
%   &&&&&&&& 1 && 0 && 1 &&  $\dots$\\[1.5mm]
%   &&&&&&&&& 1 && 1 &&  $\dots$\\[1.5mm]
%   &&&&&&&&&& 0 &&  $\dots$\\[1.5mm]
  \end{tabular}
\end{equation*}
}
Then, a simple argument by induction shows that the emerging triangle from the left
consists of the repeated alternation of a $0$ with two $1$'s,
and the pattern becomes uniform allover across the entire triangle
if all the numbers it contains are taken modulo $2$.
In particular, note that in all these triangles, except for a finite number of cases at the top,
the numbers on the left-edge are in exact proportions: one-third $0$'s and two-thirds $1$'s.

In conclusion, together with the previous remarks concerning the sequence of powers of two,
we conclude that the fixed and the '\textit{almost fixed}' points of the $\PG$ operator
point to a class of triangles that either have on the left-edge one hundred percent ones 
or two-thirds of the entries ones.

%%%%%%%%%%%%%%%%%%%%%%%%%%%%%%%%%%%%%%%%%
\begin{proposition}\label{PropositionF}
1. The Proth-Gilbreath operator applied recursively on Fibonacci sequences generated by 
non-negative relatively prime integers
generates a triangle, which on its left-edge, except for a finite number of entries,
contains the periodic sequence $1,1,0,1,1,0,\dots$

2. The left edge of the~\eqref{eqPGTriangle} triangle contains only ones if 
the sequence of numbers on the first row is $1,2,2^2,2^3,2^4,\dots$
\end{proposition}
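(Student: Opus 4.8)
The plan is to treat the two assertions separately; statement~2 is almost immediate, while statement~1 carries the bulk of the work. For statement~2, I would simply note that the row $1,2,2^2,2^3,\dots$ is a fixed point of $\PG$: the consecutive differences are $2^{k}-2^{k-1}=2^{k-1}$, so $\PG$ reproduces the same sequence. Hence every row equals $1,2,2^2,\dots$, its first entry is always $1$, and the left edge consists entirely of ones.

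For statement~1, write $F(p,q)$ for the sequence $p,q,p+q,p+2q,\dots$ generated by a seed $(p,q)$ with $p,q\ge 0$. The first key step is a one-line lemma: since every term of $F(p,q)$ is non-negative and $f_{k+1}-f_k=f_{k-1}$ for $k\ge 2$, one gets $\PG\big(F(p,q)\big)=\big(|q-p|,\,p,\,q,\,p+q,\dots\big)$, i.e. $\PG$ merely prepends $|q-p|$ and leaves $F(p,q)$ as the tail. From this, by bookkeeping the differences across the junction, I would derive that every row of the triangle generated by $F(a,b)$ has the shape $(w_1,\dots,w_n)$ followed by the fixed tail $F(a,b)$, where one application of $\PG$ evolves the prefix by the rule $(w_1,\dots,w_n)\mapsto\big(|w_2-w_1|,\dots,|w_n-w_{n-1}|,\,|a-w_n|,\,|b-a|\big)$.

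The second step isolates the left edge. Define an auxiliary sequence by $u_1=|b-a|$, $u_2=\big|a-|b-a|\big|$ and $u_k=|u_{k-1}-u_{k-2}|$ for $k\ge 3$. I would then prove by induction that the prefix of the $k$-th row is exactly the reversed block $(u_k,u_{k-1},\dots,u_1)$: the inductive step matches cleanly because $|u_{k-i}-u_{k-i+1}|=u_{k-i+2}$, while the two appended entries $|a-u_1|$ and $|b-a|$ are precisely $u_2$ and $u_1$. In particular the left-edge entry of the $k$-th row equals $u_k$, so that $d_1^{(0)}=a$ and $d_1^{(k)}=u_k$ for $k\ge 1$, reducing everything to the behaviour of the single sequence $(u_k)$.

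The final, genuinely arithmetic, step is to show $(u_k)$ is eventually the repeating block $1,1,0$. Here I would use two facts. First, the gcd is invariant: $\gcd(u_{k-1},u_k)=\gcd(u_{k-2},u_{k-1})=\cdots=\gcd(a,b)=1$. Second, $\max(u_k,u_{k-1})$ is non-increasing, hence eventually constant, equal to some $M$. The crux is that $M=1$: whenever a term attains $M$, say $u_k=M$, the relation $u_k=|u_{k-1}-u_{k-2}|$ with both predecessors in $[0,M]$ forces $\{u_{k-2},u_{k-1}\}=\{0,M\}$, so two consecutive terms are $0$ and $M$ and their gcd $M$ equals $1$. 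Once the terms lie in $\{0,1\}$ with no two consecutive zeros (again by coprimality), the recursion admits only the cycle $1,1,0$, and the degenerate seeds such as $(0,1)$ or $(1,1)$ are covered by the same computation. I expect this last step to be the main obstacle: the tempting but false picture is that the backward-extended Fibonacci numbers surface on the edge, whereas the absolute values produce cancellation and boundedness, and it is exactly the coprimality hypothesis, entering through the gcd-invariance of the absolute-difference recursion, that makes the descent to the $1,1,0$ cycle rigorous rather than merely numerical.
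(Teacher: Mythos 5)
Your proposal is correct, and both of its halves are sound: part~2 via the fixed-point observation, and part~1 via the reduction of the left edge to the backward absolute-difference recursion $u_{k+1}=|u_{k-1}-u_k|$ followed by the descent argument.

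Structurally you follow the same skeleton as the paper: the paper's Section on Fibonacci sequences makes exactly your first two observations (the Proth-Gilbreath operator applied to a non-negative Fibonacci row merely prepends $|b-a|$ and shifts, so each iteration grows a reversed prefix whose leftmost entries form the sequence $(u_k)$; this is visible in the paper's worked example with seed $3,1$). The difference is in the final, arithmetic step. The paper disposes of it with the single phrase that ``a simple argument by induction shows'' the emerging left triangle consists of the repeated alternation of a $0$ with two $1$'s; it never explains how the coprimality hypothesis enters, nor why the descent from an arbitrary coprime seed must terminate in the $1,1,0$ cycle rather than, say, stabilize at some larger values. Your argument supplies precisely this missing content, and does so cleanly: the invariance $\gcd(u_k,u_{k+1})=\gcd(a,b)=1$ under the recursion, the monotonicity of $\max(u_k,u_{k+1})$ forcing an eventual constant $M$, the observation that $|x-y|=M$ with $x,y\in[0,M]$ forces $\{x,y\}=\{0,M\}$ and hence $M=\gcd(0,M)=1$, and finally the exhaustive check of the recursion on pairs from $\{0,1\}$ with no two consecutive zeros. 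This is the genuinely nontrivial part of the Proposition (it is, in effect, the termination of the subtractive Euclidean algorithm in disguise), and your identification of it as the crux --- together with the remark that the naive ``backward Fibonacci'' picture fails because of the absolute values --- is exactly right. In short: same route as the paper, but yours is a complete proof where the paper offers a sketch.
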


%%%%%%%%%%%%%%%%%%%%%%%%%%%%%%%%%%%%%%%%%%%%%%%%%%%%%%%%%%%%%%%%%%%%
% \section{Horizontal rays in the \texorpdfstring{\eqref{eqPGTriangle}}{P-G} triangle}
\section{The characterization of fixed points}\label{SectionEquivalenceRelation}

To describe the combined nature of horizontal and vertical periodicity observed 
in the examples discussed in Section~\ref{SectionFibonacci}, 
we start by introducing 
an equivalence relation on the sequences that replicate fully or 
only partially in the triangle.

%%%%%%%%%%%%%%%%%%%%%%%%%%%%%%%%%%%%%%%
% \succ
% \succeq
% \Vdash
% \amalg
% \asymp
% \models
% \newcommand{\aRb}{\rotatebox[origin=c]{90}{$\models$}}
\subsection{Notations and definitions}\label{SubsectionEquivalenceClasses}
Denote by $\cL$ the set of all sequences of non-negative integers and by $\cL_2$ the set of sequences 
of $0$ and $1$.

We say that two sequences in $\cL$ are \textit{equivalent} if they ultimately coincide. Precisely,
if $\ba=(a_1,a_2,\dots)$ and $\bb=(b_1,b_2,\dots)$ are in $\cL$, then $\ba\aRb \bb$
if there exists $m,n\ge 1$ such that $a_{m+k}=b_{n+k}$ for $k\ge 0$.
One immediately checks this relation is reflexive, symmetric and transitive, that is,
`$\aRb$' is an equivalence relation.

Let $\widehat{\cL}=\cL{/_\aRb}$ denote the set of equivalence classes. 
Thus, if $\balpha\in\widehat{\cL}$ and $\ba \in \balpha$, then $\balpha=\{\bb\in\cL : \bb\aRb\ba\}$.
Also, if $\ba\in\cL$, we denote by $\vu*{\ba}$ its equivalence class, so that 
$\vu*{\ba} = \{\bb\in\cL : \bb\aRb\ba\}$.

Denote now by $\Psi :\cL\to\cL$ the $\PG$ operator.
Then, immediately by the definition, we see that if $\ba\aRb\bb$, it follows that 
$\Psi(\ba)\aRb\Psi(\bb)$.

We also have the associated quotient map  $\widehat{\Psi} :\widehat{\cL}\to\widehat{\cL}$, 
which is defined as follows:
let $\balpha\in\widehat{\cL}$ and let $\ba\in\balpha$, so that $\balpha=\vu*{\ba}$.
Then put $\widehat{\Psi}(\balpha):= \widehat{\Psi(\ba)}$.
Note that $\widehat{\Psi}$ is well defined, since if $\ba$ and $\bb$ are both in $\balpha$, then
$\ba\aRb \bb$, which implies $\Psi(\ba)\aRb\Psi(\bb)$, 
so that $\widehat{\Psi(\ba)}=\widehat{\Psi(\bb)}$.
Now the problem of characterizing the rows that repeat in the triangle~\eqref{eqPGTriangle} 
is the same as that of describing the fixed points of $\widehat{\Psi}$.

Note that $\Psi$ and $\widehat{\Psi}$ restricted to $\cL_2$ and the subset of equivalences classes
$\widehat{\cL_2}=\cL_2/_{\!\aRb}$,
which contains only sequences of~$0$'s and $1$'s, act in the same manner.
Furthermore, we can also describe the rows of the~\eqref{eqPGTriangle} triangle using 
the formal power series with non-negative integer coefficients 
or those with coefficients in $\FF_2=\ZZ/ 2\ZZ$, denoted by~$\FF_2[[X]]$.
Thus, to a sequence $\balpha=(a_0,a_1,a_2,\dots)$, we associate the formal 
power series
\begin{equation*}
   \begin{split}
   \phi(\balpha) = \phi(\balpha)(X): = \sum_{k\ge 0} a_k X^k.
   \end{split}   
\end{equation*}

For example, if $\bF$ is the periodic sequence
$\bF = (0,1,1,0,1,1,0,1,1,\dots)$, then 
\begin{equation*}
   \phi(\bF) = X + X^2 + X^4 + X^5 + X^6 + X^7 +\cdots.    
\end{equation*}
Note that $\phi(\bF)$ belongs also to $\FF_2[[X]]$ and additionally it can be expressed as a rational function:
\begin{equation}\label{eqFibonacciRatio}
    \phi(\bF) = \big(X+X^2\big)\sum_{k\ge 0} X^{3k}
    =\frac{X+X^2}{1+X^3} 
    = \frac{X}{1+X+X^2}.
\end{equation}

Also remark that if $\balpha=(\alpha_0,\alpha_1,\dots)$ has components in $\FF_2$, 
then the $\PG$ operator acts by the following formula:
\begin{equation*}%\label{eqFP}
     \phi(\Psi(\balpha)) : = \sum_{k\ge 0} (a_k+a_{k+1}) X^k
   = \sum_{k\ge 0} a_kX^k + \frac 1X\sum_{k\ge 0} a_kX^k-\frac{a_0}{X}\,,
\end{equation*}
that is,
\begin{equation}\label{eqPGoperator}
    \phi\big(\Psi(\balpha)\big) = \frac{(1+X)\phi(\balpha)-\alpha_0}{X}.
\end{equation}

%%%%%%%%%%%%%%%%%%%%%%%%%%%%%%%%%%
\subsection{Proof of Theorem~\ref{TheoremSeriesFP}}
Suppose in the following that  the entries from the first line of~\eqref{eqPGTriangle} are only 
$0$'s and $1$'s,  so that we take advantage of the simplicity of 
operating with power series with coefficients in $\FF_2$, where $-1 =1$.

Note that if $\balpha\in\cL_2$ then $\Psi(\balpha)\in\cL_2$, so that
the whole triangle~\eqref{eqPGTriangle} contains only elements of $\FF_2$.

In terms of power series, the condition that two rows in~\eqref{eqPGTriangle} are ultimately identical
translates into a condition that the difference between one of the series and the shift of the other
is a polynomial. We state this observation in the following lemma that holds in $\cL$.
\begin{lemma}\label{LemmaEquivalence}
   Let $\balpha, \bbeta\in\cL$. Then, $\balpha\aRb\bbeta$ if and only if
there exists an integer $r\ge 0$ and a polynomial $P(X)\in\ZZ[X]$ such that
\begin{equation}\label{eqTwoWays}
   \begin{split}
   \mathrm{either }\quad \phi(\balpha)-X^r\phi(\bbeta) = P(X)  \quad
   \mathrm{ or }\quad \phi(\bbeta) - X^r\phi(\balpha) = P(X). 
   \end{split}   
\end{equation}
\end{lemma}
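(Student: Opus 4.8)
The plan is to unwind the definition of the equivalence relation ``$\aRb$'' directly and translate each condition into a statement about the associated power series. Recall that $\balpha\aRb\bbeta$ means there exist $m,n\ge 1$ such that $\alpha_{m+k}=\beta_{n+k}$ for all $k\ge 0$; that is, after deleting a finite initial segment from each sequence the two tails coincide term-by-term. The whole point is that deleting an initial segment of length $j$ from a sequence and then re-indexing corresponds, on the level of power series, to subtracting off a polynomial (the discarded low-order terms) and multiplying by a suitable power $X^{-j}$; equivalently, a shift of the tail forward by $j$ positions multiplies the series by $X^{j}$.

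First I would prove the ``only if'' direction. Assume $\balpha\aRb\bbeta$ with witnesses $m,n$, and without loss of generality suppose $m\ge n$, setting $r:=m-n\ge 0$. The equal-tails condition $\alpha_{m+k}=\beta_{n+k}$ for all $k\ge0$ says precisely that the tail of $\phi(\balpha)$ starting at exponent $m$ equals $X^{r}$ times the tail of $\phi(\bbeta)$ starting at exponent $n$. Writing each series as (its low-order head, a polynomial) plus (its tail), I would form $\phi(\balpha)-X^r\phi(\bbeta)$: the two tails cancel exactly by the equal-tails hypothesis, leaving only a difference of two polynomials of bounded degree, hence a polynomial $P(X)\in\ZZ[X]$. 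This gives the first alternative in~\eqref{eqTwoWays}; the case $n\ge m$ gives the second by symmetry.

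For the ``if'' direction I would run the argument in reverse. Suppose, say, $\phi(\balpha)-X^r\phi(\bbeta)=P(X)$ for some $r\ge 0$ and polynomial $P$. Comparing coefficients of $X^{k}$ on both sides for $k$ strictly larger than $\deg P$ forces $a_k=b_{k-r}$, i.e.\ $\alpha_k=\beta_{k-r}$ for all sufficiently large $k$, say $k\ge K$. Setting $m:=\max(K,r)$ and $n:=m-r$ (both $\ge 1$ after enlarging $m$ if necessary) yields $\alpha_{m+j}=\beta_{n+j}$ for all $j\ge 0$, which is exactly $\balpha\aRb\bbeta$. The second alternative is handled identically with the roles of $\balpha$ and $\bbeta$ swapped.

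I do not expect a genuine obstacle here: the lemma is essentially a dictionary entry translating ``finite-index-shift agreement of sequences'' into ``polynomial discrepancy of series,'' and the content is entirely in keeping the bookkeeping of the two shift parameters $m,n$ and their difference $r$ straight. The one point deserving care is the asymmetry of the shift---one must allow the multiplication by $X^r$ to land on either series, which is why the lemma is stated with the disjunction in~\eqref{eqTwoWays} rather than a single equation; I would make sure the ``without loss of generality $m\ge n$'' reduction is stated explicitly so that both cases of the disjunction are visibly accounted for. I would also note that the argument is purely formal and uses nothing about $\FF_2$, so the lemma is correctly asserted over all of $\cL$ with $P(X)\in\ZZ[X]$, and it specializes verbatim to the $\FF_2$ setting needed for Theorem~\ref{TheoremSeriesFP}.
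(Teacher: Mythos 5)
Your proof is correct and follows essentially the same route as the paper's: in the forward direction you decompose each series into a polynomial head plus a common (shifted) tail so that the tails cancel in $\phi(\balpha)-X^r\phi(\bbeta)$, and in the converse direction you compare coefficients beyond $\deg P$ to recover the equal-tails condition. The only cosmetic difference is that the paper names the common tail explicitly as a series $h(X)$, whereas you phrase the cancellation directly; the bookkeeping with the shift $r=m-n$ and the disjunction is handled the same way in both.
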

\begin{proof}
   Suppose $\balpha\aRb\bbeta$. Then there exists two integers $u,v\ge 0$, 
   a formal series $h(X)$ and two polynomials
   $U(X),V(X)\in\ZZ[X]$ of degrees less than $u$ and $v$, respectively, such that 
   $\phi(\balpha)=U(X)+X^uh(X)$ and $\phi(\bbeta)=V(X)+X^vh(X)$.
Suppose $u\le v$ and let $r=v-u$. Then 
$X^r\phi(\balpha)=X^rU(X)+X^vh(X)$. 
Then it follows that 
\begin{equation*}
   \begin{split}
   \phi(\bbeta) - X^r\phi(\balpha)&= \big(V(X)+X^vh(X)\big)- \big(X^rU(X)+X^vh(X)\big)\\
   &= V(X) - X^rU(X),
   \end{split}   
\end{equation*}
equality which is the first of the two alternatives in~\eqref{eqTwoWays} with $P(X)=V(X) - X^rU(X)$.
Similarly, if $u> v$, we find that the second equality in\eqref{eqTwoWays} holds.

Conversely, suppose $\phi(\balpha)-X^r\phi(\bbeta) = P(X)$, 
the other possibility being treated symmetrically.
Then $\phi(\balpha) = P(X) + X^r\phi(\bbeta)$. Here, the equality of the series is equivalent with 
the equality of the coefficients, and this in turn holds modulo a shift of size $r$ 
for all terms of $\balpha$ and $\bbeta$ of sufficiently large ranks. 
Therefore $\balpha\aRb\bbeta$. This concludes the proof of the lemma.
\end{proof}

\noindent

%%%%%%%%%%%%%

Then, by Lemma~\ref{LemmaEquivalence}, the property of $\balpha\in\cL_2$ 
that $\widehat{\balpha}$ is
a fixed point of  $\widehat{\Psi}$, that is, $\Psi(\balpha)\aRb\balpha$,
translates into the existence of an integer $r\ge 0$ such that 
\begin{equation}\label{eqAlt0}
   \begin{split}
   \phi(\Psi(\balpha)) - X^r\phi(\balpha) \in\FF_2[X]  \quad
   \mathrm{ or }\quad 
   \phi(\balpha) - X^r\phi(\Psi(\balpha)) \in\FF_2[X].
   \end{split}   
\end{equation}

The case $r=0$ holds when the rows $\phi(\balpha)$ and $\phi(\Psi(\balpha))$
are the same, with no shifting, with the possible exception of some terms from the beginning, 
situation that is covered in Theorem~\ref{TheoremSeriesFP} by the first expression 
in~\eqref{eqTheorem2}.

Suppose now that $r\ge 1$.
Using formula~\eqref{eqPGoperator}, we see that this couple of conditions~\eqref{eqAlt0} 
is equivalent with the couple:
\begin{equation*}%\label{eqAlt1}
   \begin{split}
   \frac{(1+X)\phi(\balpha)-\alpha_0}{X}-X^r\phi(\balpha)\in\FF_2[X]   \quad
   \mathrm{ or }\quad 
   \phi(\balpha)-X^r \frac{(1+X)\phi(\balpha)-\alpha_0}{X}\in\FF_2[X].
   \end{split}   
\end{equation*}
Equivalently, these can also be reformulated as
\begin{equation*}%\label{eqAlt2}
   \begin{split}
   \phi(\balpha)\big(1+X+X^{r+1}\big) \in\FF_2[X]  \quad
   \mathrm{ or }\quad 
   \phi(\balpha)\big(X^{r-1}(1+X)+1\big)\in\FF_2[X],
   \end{split}   
\end{equation*}
relations which, in their turn, are equivalent to formulation in~\eqref{eqTheorem2}.
This concludes the proof of Theorem~\ref{TheoremSeriesFP}.

\begin{remark}\label{RemarkTh2}
Note that in Theorem~\ref{eqTheorem2} we could have let $r$ take integer values not necessarily positive.
Indeed, observing that
\begin{equation*}
   \begin{split}
   \frac{P(X)}{1+X+X^{-r}} %= \frac{P(X)}{1+X+\frac{1}{X^r}} 
   = \frac{X^rP(X)}{X^r(1+X) +1} = \frac{P^{*}(X)}{X^r(1+X) +1}, 
   \end{split}   
\end{equation*}
for some polynomial $P^{*}(X)\in\FF_2[X]$,
by letting $r$ free, not necessarily positive, the two alternatives in~\eqref{eqTheorem2} 
would have been identified in one.
So we could say~\eqref{eqTheorem2} acts like a `hinge' mirroring 
in the~\eqref{eqPGTriangle} triangle the horizontal `waves' with the vertical ones 
that pass along both ways from top to bottom and from bottom to top.
\end{remark}

% \medskip
%%%%%%%%%%%%%%%%%%%%%%%%%%%%%%%%%%%%%%%%%%%%%%%%%%%%%%%%%%%%%%%%%%%%%%%%%%%%%%%%%
\subsection{The Fibonacci series}

The Fibonacci sequence $\bF=(0,1,1,0,1,1,0,1,1,\dots) \mod 2$
is periodic and it can be expressed as the rational function~\eqref{eqFibonacciRatio}, 
which is exactly as that in Theorem~\ref{TheoremSeriesFP} with \mbox{$P(X)=X$} and $r=2$.
As a consequence it follows that
$\widehat{\bF}$ is a fixed point of~$\widehat{\Psi}$.
A direct calculation or else a manipulation of the associated series shows that
the other two Fibonacci sequences given by the initial conditions $1,0$ and $1,1$ are:
\begin{equation*}
   \begin{split}
   \bF' & = (1,0,1,1,0,1,1,0,1,\dots) \text{ and }
   \phi(\bF') = \frac{1+X}{1 + X + X^2},
   \\
   \bF'' & = (1,1,0,1,1,0,1,1,0,\dots) \text{ and }
   \phi(\bF'') = \frac{1}{1 + X + X^2}.
   \end{split}   
\end{equation*}
Note that  $\bF, \bF',\bF''$ are the rows that alternate periodically to build the entire 
\mbox{Fibonacci}~\eqref{eqPGTriangle} triangle modulo $2$.

% \end{document}
\smallskip
We remark that the closely related sequence $\bT=(0,1,1,1,0,1,1,1,0,\dots)$
does not have $\phi(\bT) = \frac{X}{1+X+X^3}$ as the rational function associated from Theorem~\ref{TheoremSeriesFP},
as one would be tempted to assume.
The reason is, on the one hand, the subsequent rows that $\bT$ generates are:
{\setlength{\tabcolsep}{3.5pt}
\newcommand{\ze}{{\color{blue}\textbf{0}}}
\newcommand{\un}{{\color{red}\textbf{1}}}
\small
\begin{equation*}
   \begin{tabular}{ccccccccccccccccccccccccc}%\label{eqFabcd}
 0 && 1 && 1 && 1 && 0 && 1 && 1 && 1 && 0 && 1 && 1 && \!\!\!\!1 && $\!\!\dots$\\[1.5mm]
  & 1 && 0 && 0 && 1 && 1 && 0 && 0 && 1 && 1 && 0 && \!\!0 && $\!\!\dots$\\[1.5mm]
  && 1 && 0 && 1 && 0 && 1 && 0 && 1 && 0 && 1 && 0 && $\!\!\dots$\\[1.5mm]
  &&& 0 && 0 && 0 && 0 && 0 && 0 && 0 && 0 && 0 && $\!\!\dots$
  \end{tabular}
\end{equation*}
}

\noindent
and afterwards all the components become zeros.
In particular we see that $\widehat{\bT}$ is not a fixed point of $\widehat{\Psi}$.
On the other hand, the associated series of $\bT$ is
\begin{equation*}
   \begin{split}
   \phi(\bT) %= (X+X^2+X^3)+ (X^5+X^6+X^7)+\cdots
    = (X+X^2+X^3)\sum_{k\ge 0} X^{4k}
   =\frac{X(1+X+X^2)}{1+X^4},
   \end{split}   
\end{equation*}
% 
% \begin{equation*}
%    \begin{split}
%    \phi(\bT) & = (X+X^2+X^3)+ (X^5+X^6+X^7)+(X^9+X^{10}+X^{11})+\cdots
%    \\
%    & = (X+X^2+X^3)\sum_{k\ge 0} X^{4k}
%    \\
%    & =\frac{X(1+X+X^2)}{1+X^4}
%    \end{split}   
% \end{equation*}
which cannot be expressed as the ratio between a polynomial in $\FF_2[X]$ and 
$1+X+X^r$ or $X^r(1+X)+1$
for any integer $r\ge 0$, because if it were possible it would contradict Theorem~\ref{TheoremSeriesFP}.

%%%%%%%%%%%%%%%%%%%%%%%%%%%%%%%%%%%%%%%%%%%%%%%%%%%%%%%%%%
\section{Fixed points and their formal power series}\label{SectionFP}
Let  $r\ge 2$ be an integer and consider the polynomial $f_r(X) = X^r + X+ 1$. 
Note that $f_r(X)$ has no roots in $\FF_2[X]$, because $f_r(0) = f_r(1) = 1$, 
so that we factor $f_r(X)$ over $\mybar{\FF}_2[X]$, 
where $\mybar\FF_2$ is an algebraic closure of $\FF_2$. 
Thus, $f_r(X) = (X-\eta_1)\cdot(X-\eta_2)\cdots(X-\eta_r)$, 
with $\eta_1, \eta_2,\dots, \eta_r \in \mybar{\FF}_2$.

Let $K=\FF_2(\eta_1,\dots,\eta_r)\subset\mybar\FF_2$ be the smallest subfield of $\mybar{\FF}_2$ 
that contains all the roots of $f_r(X)$ and let  $d = [K:\FF_2]$ 
be the degree of the extension.
Then, the cardinality of $K$ is a prime power, and in our case it is 
$|K|=2^d$.
Since $K^\times$, the largest multiplicative subgroup of $K$, is cyclic and  
contains all the non-zero elements, we have $|K^\times| = 2^d - 1$. 
In particular, it follows that  
\begin{equation}\label{eqPowerOfRoots}
    \eta_1 ^{2^d - 1} = \eta_2 ^{2^d - 1} = \cdots =\eta_r ^{2^d - 1} = 1.
\end{equation}

%%%%%%%%%%%%%%%%%%%%%%%%%
\medskip
% The following lemma is well known but we include the proof for completness.
\begin{lemma}\label{LemmaRoots}
All the roots of the polynomial 
$f_r(X) = X^r + X+ 1$ are distinct
in an algebraic closure of $\FF_2$.
%  are distinct.
\end{lemma}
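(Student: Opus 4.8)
The plan is to show that $f_r(X) = X^r + X + 1$ is separable over $\FF_2$ by proving that it shares no common root with its formal derivative $f_r'(X)$. This is the standard criterion: a polynomial over a field has a repeated root in the algebraic closure if and only if it shares a nontrivial factor with its derivative, equivalently $\gcd(f_r, f_r') \neq 1$. So the whole lemma reduces to a $\gcd$ computation in $\FF_2[X]$.

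First I would compute the derivative. In characteristic $2$, the derivative of $X^r$ is $rX^{r-1}$, which is $0$ when $r$ is even and $X^{r-1}$ when $r$ is odd; the derivative of $X$ is $1$ and the derivative of the constant $1$ is $0$. So $f_r'(X) = rX^{r-1} + 1$, and in $\FF_2$ this is $f_r'(X) = X^{r-1} + 1$ if $r$ is odd and $f_r'(X) = 1$ if $r$ is even. The even case is immediate: $f_r'(X) = 1$ is a nonzero constant, so $\gcd(f_r, f_r') = 1$ and there can be no repeated root.

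For the odd case, the task is to show $\gcd(X^r + X + 1,\; X^{r-1} + 1) = 1$. Suppose $\eta$ is a common root in $\mybar{\FF}_2$. From $f_r'(\eta) = 0$ we get $\eta^{r-1} = 1$, and in particular $\eta \neq 0$. Then $\eta^r = \eta \cdot \eta^{r-1} = \eta$, so substituting into $f_r(\eta) = 0$ gives $\eta + \eta + 1 = 0$, i.e. $1 = 0$ in $\FF_2$, a contradiction. Hence there is no common root, the two polynomials are coprime, and $f_r$ is separable; all $r$ roots are distinct. The argument is uniform enough that I would likely not even split into parity cases for the writeup — whether one uses $f_r' = X^{r-1}+1$ or handles $f_r' = 1$ separately, the conclusion that $f_r$ and $f_r'$ share no root is what matters.

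The computation itself is routine, so there is no serious obstacle; the only thing to be careful about is the characteristic-$2$ bookkeeping in the derivative (ensuring one does not carry a spurious sign and correctly zeroes out the $rX^{r-1}$ term when $r$ is even). If one preferred to avoid derivatives entirely, an alternative is to argue directly: a repeated root $\eta$ would have to satisfy both $\eta^r + \eta + 1 = 0$ and the condition that $\eta$ is a root of the quotient, but the derivative route is cleanest and I would present that.
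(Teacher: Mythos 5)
Your proof is correct and takes essentially the same route as the paper: both reduce to showing $f_r$ and its formal derivative $f_r'(X)=rX^{r-1}+1$ have no common root, dispose of even $r$ immediately (the derivative reduces to the constant $1$), and for odd $r$ use $\eta^{r-1}=1$ to get $\eta^r=\eta$ and hence the contradiction $1=0$ in $\FF_2$. The only cosmetic difference is that the paper justifies ``a repeated root is a root of the derivative'' by hand from the factorization $f_r(X)=(X-\eta_j)^2H(X)$, whereas you invoke the standard $\gcd$/separability criterion.
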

\begin{proof}
Suppose $\eta_1, \eta_2, \dots, \eta_r$ are the roots of $f_r(X)$
and there exist distinct indices $j$ and~$k$ such that $\eta_j = \eta_k$.
Then, $f_r(X) = (X-\eta_j)^2 H(X)$ for some polynomial $H(X)\in\FF_2[X]$.
Note that $\eta_j$ is also a root of the derivative $f_r'(X)$, since
\begin{equation*}
f_r'(X) %= 2(X-\eta_j)H(X) + (X-\eta_j)^2\cdot H'(X)=
=(X-\eta_j)\big(2H(X)+(X-\eta_j)H'(X)\big).    
\end{equation*}
It then follows that
\begin{equation*}
\eta_j^r + \eta_j + 1 = 0\ \ \text{ and }\ \ 
r\eta_j^{r-1}+1 = 0.
\end{equation*}
Here, the second equality cannot hold if $r$ is even (that is, if $r$'s image in $\FF_2$ is $0$), 
since, otherwise, it would imply that~$1=0$.

If $r$ is odd, then we simultaneously have
\begin{equation*}
\eta_j^r + \eta_j + 1 = 0\ \ \text{ and }\ \ 
\eta_j^{r-1}+1 = 0.
\end{equation*}
But this again implies the same contradiction $1=0$, and, therefore, 
the lemma is proved.

\end{proof}

\noindent
The equalities~\eqref{eqPowerOfRoots} show that the $\eta_j$'s
are roots to both polynomials $f_r(X)$ and \mbox{$X^{2^d-1}-1$.} 
Therefore, employing Lemma~\ref{LemmaRoots}, we find that $X^{2^d - 1} - 1$
is divisible by $f_r(X)$, so that
\begin{equation}\label{eqfH}
      X^{2^d - 1} - 1=(X^r + X+1)H(X),
\end{equation}
for some $H(X)\in\FF_2[X]$.

Suppose now that $\balpha\in\cL_2$ belongs to a class of the equivalence relation $\aRb$
that is a fixed point of $\widehat{\Psi}$.
Then, on combining the  conclusion of Theorem~\ref{TheoremSeriesFP} 
with the expression~\eqref{eqfH}, we find that the power series associated to $\balpha$
can  be written as
\begin{equation}\label{eqphiG1}
    \phi(\balpha) = \frac{G(X)}{1 - X^{2^d - 1}},
\end{equation}
where $G(X) = P(X)H(X)$ is a fixed polynomial in $\FF_2[X]$.

Let us note that the reciprocal of this statement is also true.

And still, taking into account that the operations on the coefficients are made in $\FF_2$,
the rational fraction~\eqref{eqphiG1} can be written equivalently as a power series that comprises  
the coefficients of $\balpha$.
We state our findings in the next theorem.
%%%%%%%%%%%%%%%%%%%%%%%%%%%%%%%%%%%%%%%%%%%%%%%%%
\begin{theorem}\label{TheoremSeries}
   Let $\balpha\in\cL_2$. Then, $\balpha$ is ultimately identical with $\Psi(\balpha)$
if and only if there exists  
% a prime number $p$, 
a positive integer $d$
and a polynomial $G(X)\in\FF_2[X]$ such that the power series associated to $\balpha$ is
\begin{equation*}%\label{eqphiG2}
    \phi(\balpha) = \frac{G(X)}{1 - X^{2^d - 1}}
    = G(X) \left(1 + X^{2^d - 1} + X^{2(2^d - 2)}
     + X^{3(2^d - 1)}+\cdots\right).
\end{equation*}
\end{theorem}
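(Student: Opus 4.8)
The plan is to read Theorem~\ref{TheoremSeries} as the biconditional \emph{$\widehat{\balpha}$ is a fixed point of $\widehat{\Psi}$ if and only if $\phi(\balpha)=G(X)/(1-X^{2^d-1})$ for some integer $d\ge 1$ and some $G\in\FF_2[X]$}, and to establish the two implications by chaining the results already assembled above; the stated power-series form is then only a geometric-series rewriting of the rational fraction. Throughout I use that $\balpha$ is ultimately identical with $\Psi(\balpha)$ exactly when $\Psi(\balpha)\aRb\balpha$, i.e. when $\widehat{\balpha}$ is fixed by $\widehat{\Psi}$.

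For the direct implication I would start from $\Psi(\balpha)\aRb\balpha$ and apply Theorem~\ref{TheoremSeriesFP}, which yields $r\ge 0$ and $P\in\FF_2[X]$ with $\phi(\balpha)=P(X)/(1+X+X^r)$ or $\phi(\balpha)=P(X)/(X^r(1+X)+1)$. The degenerate values $r\in\{0,1\}$ collapse the first denominator to $X$ or $1$, so $\phi(\balpha)$ is a (Laurent) polynomial and $\balpha$ is ultimately zero; such a $\phi(\balpha)$ fits the target shape trivially after clearing, since $G(X):=\phi(\balpha)\,(1-X^{2^d-1})\in\FF_2[X]$ for any $d$. For $r\ge 2$ I would reuse the passage leading to \eqref{eqfH}: by Lemma~\ref{LemmaRoots} the roots of $f_r(X)=X^r+X+1$ are simple and, by \eqref{eqPowerOfRoots}, they all lie in $\FF_{2^d}^\times$ with $d=[K:\FF_2]$, so $f_r(X)\mid X^{2^d-1}-1$ and $X^{2^d-1}-1=(X^r+X+1)H(X)$. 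Multiplying the numerator and denominator of $\phi(\balpha)$ by $H(X)$ gives $\phi(\balpha)=G(X)/(1-X^{2^d-1})$ with $G=PH$. For the second, reciprocal denominator $X^r(1+X)+1=1+X^r+X^{r+1}$ I would note that it equals the reverse polynomial $X^{r+1}f_{r+1}(1/X)$; its roots are the inverses of the roots of $f_{r+1}$, hence again $(2^{d'}-1)$-th roots of unity, so the same divisibility and reduction apply.

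For the converse I would reverse this chain: starting from $\phi(\balpha)=G(X)/(1-X^{2^d-1})$, use \eqref{eqfH} to factor the denominator as $(1+X+X^r)H(X)$ for a suitable trinomial, cancel, and thereby recover a representation of the form \eqref{eqTheorem2}; Theorem~\ref{TheoremSeriesFP} then returns $\Psi(\balpha)\aRb\balpha$. Equivalently, and more self-containedly, I would verify the fixed-point condition directly via Lemma~\ref{LemmaEquivalence} together with \eqref{eqPGoperator}: a short computation gives $\phi(\Psi(\balpha))-X^r\phi(\balpha)=\bigl(G(X)(1+X+X^{r+1})-\alpha_0(1-X^{2^d-1})\bigr)\big/\bigl(X(1-X^{2^d-1})\bigr)$, where one checks that divisibility by the factor $X$ is automatic because $\alpha_0=G(0)$, so the task reduces to choosing $r$ for which $1-X^{2^d-1}$ divides $G(X)\,(1+X+X^{r+1})$. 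Finally, the explicit series in the statement is just $G(X)\,\bigl(1+X^{2^d-1}+X^{2(2^d-1)}+\cdots\bigr)$, obtained by expanding $1/(1-X^{2^d-1})$ as a geometric series.

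I expect the converse, and specifically its divisibility step, to be the main obstacle. In the direct implication the factor $H$ is handed to us alongside a trinomial $f_r$ dividing $X^{2^d-1}-1$, so the cancellation is free; in the converse one is given only the bare shape $G/(1-X^{2^d-1})$, and the delicate point is to \emph{produce} the trinomial denominator — equivalently, to guarantee that $H$ divides $G$, i.e. that $\phi(\balpha)$ in lowest terms has denominator dividing some $1+X+X^r$ or $1+X^r+X^{r+1}$. The cleanest way I see to discharge this is to reduce the rational fraction to lowest terms and to identify which divisors of $X^{2^d-1}-1$ are of this trinomial (or reverse-trinomial) type, so that the reduction to \eqref{eqTheorem2}, and hence the application of Theorem~\ref{TheoremSeriesFP}, is justified for the given $d$ and $G$.
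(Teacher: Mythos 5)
Your forward implication coincides with the paper's own proof: invoke Theorem~\ref{TheoremSeriesFP}, then use Lemma~\ref{LemmaRoots} and the divisibility~\eqref{eqfH} to replace the trinomial denominator by $1-X^{2^d-1}$. You in fact supply two details the paper leaves implicit --- the degenerate values $r\in\{0,1\}$ and the treatment of $X^r(1+X)+1$ as the reciprocal of $f_{r+1}$ --- so that half is complete and correct.

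The gap is in the converse, exactly where you predicted, and it is not a gap you (or anyone) can close: the ``if'' direction of Theorem~\ref{TheoremSeries}, with $d$ and $G$ quantified freely, is false. Take $d=2$ and $G(X)=1$, so that
\begin{equation*}
   \phi(\balpha)=\frac{1}{1+X^{3}},\qquad \balpha=(1,0,0,1,0,0,1,0,0,\dots).
\end{equation*}
Then $\Psi(\balpha)=(1,0,1,1,0,1,1,0,1,\dots)$: every tail of $\balpha$ is a rotation of $(1,0,0)$ repeated, with one $1$ per period, while every tail of $\Psi(\balpha)$ is ultimately a rotation of $(1,1,0)$ repeated, with two $1$'s per period, so $\balpha\not\aRb\Psi(\balpha)$. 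The same failure is visible through Theorem~\ref{TheoremSeriesFP}: a representation $\phi(\balpha)=P(X)/(1+X+X^{r})$ or $\phi(\balpha)=P(X)/(X^{r}(1+X)+1)$ would force $1+X+X^{r}=P(X)(1+X^{3})$ or $X^{r}(1+X)+1=P(X)(1+X^{3})$, and evaluating at $X=1$ gives $1=0$ in $\FF_2$, since each of these trinomials has an odd number of terms while $1+X^{3}$ vanishes at $X=1$. So the divisibility you flagged as the delicate point --- that the lowest-terms denominator of $\phi(\balpha)$ divide some trinomial, equivalently that $H\mid G$ --- genuinely fails for some choices of $G$, and no reduction to lowest terms can manufacture it. Note that the paper does not prove this direction either; it merely asserts ``the reciprocal of this statement is also true,'' and that assertion is only correct for the restricted converse in which $G=P(X)H(X)$ with $H$ the cofactor from~\eqref{eqfH} (where it is an immediate cancellation followed by Theorem~\ref{TheoremSeriesFP}). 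By isolating the missing divisibility, your write-up has in effect located an error in the theorem as stated rather than a defect in your own plan; a correct statement must either constrain $G$ in this way or revert to the denominators of Theorem~\ref{TheoremSeriesFP}.
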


%%%%%%%%%%%%%%%%%%%%%%%%%%%%%%%%%%%%%%%%%%%%%
\section{Leap fixed points of the Proth-Gilbreath operator}\label{SectionLFP}

The next lemma provides the relation between the powers series associated to
two rows in the~\eqref{eqPGTriangle} triangle. 
\begin{lemma}\label{LemmaApart}
   Let $\balpha\in \cL_2$ be a row in the~\eqref{eqPGTriangle} triangle and let $k\ge 0$ be integer.
   Then, there exits a unique polynomial $R(X)\in \FF_2[X]$ of degree $0\le \deg(R(X))\le k-1$
   such that
  \begin{equation}\label{eqNextRows}
   \phi\big(\Psi^{[k]}(\balpha)\big) = 
   \frac{ (1+X)^k\phi(\balpha) -R(X)}{X^k}
     \text{\ \  for $k\ge 1$}.
\end{equation} 
\end{lemma}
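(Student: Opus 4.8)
```latex
The plan is to prove Lemma~\ref{LemmaApart} by induction on $k$, using the
single-step formula~\eqref{eqPGoperator} as the engine and tracking how the
``polynomial correction'' $R(X)$ evolves under each application of $\Psi$.
The base case $k=1$ is exactly~\eqref{eqPGoperator}: it gives
$\phi(\Psi(\balpha)) = \big((1+X)\phi(\balpha)-\alpha_0\big)/X$, which is the
claimed shape with $R(X)=\alpha_0$, a constant, hence of degree
$0\le\deg R\le 0$. So the statement holds for $k=1$.

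For the inductive step, I would assume~\eqref{eqNextRows} holds for some
$k\ge 1$ with a unique $R(X)$ of degree at most $k-1$, and then apply
$\Psi$ once more to the row $\Psi^{[k]}(\balpha)$. Writing
$\bbeta = \Psi^{[k]}(\balpha)$ and letting $\beta_0$ denote its leading
coefficient, formula~\eqref{eqPGoperator} gives
\begin{equation*}
\phi\big(\Psi^{[k+1]}(\balpha)\big)
= \frac{(1+X)\phi(\bbeta)-\beta_0}{X}.
\end{equation*}
Substituting the inductive expression for
$\phi(\bbeta)=\big((1+X)^k\phi(\balpha)-R(X)\big)/X^k$ and clearing
denominators, I expect to obtain
\begin{equation*}
\phi\big(\Psi^{[k+1]}(\balpha)\big)
= \frac{(1+X)^{k+1}\phi(\balpha)-\big((1+X)R(X)+\beta_0 X^k\big)}{X^{k+1}},
\end{equation*}
so the new correction polynomial is
$R_{k+1}(X)=(1+X)R(X)+\beta_0 X^k$. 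A degree check then shows
$\deg R_{k+1}\le\max\{1+(k-1),\,k\}=k$, which is the required bound for
index $k+1$.

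The point that needs the most care is the \emph{uniqueness} and the precise
degree bound, rather than mere existence. For uniqueness, I would argue that
since $\phi(\Psi^{[k]}(\balpha))$ is a genuine power series (no negative powers
of $X$), the numerator $(1+X)^k\phi(\balpha)-R(X)$ must be divisible by $X^k$
in $\FF_2[[X]]$; this forces $R(X)$ to agree with the truncation of
$(1+X)^k\phi(\balpha)$ to its terms of degree $<k$, which pins $R(X)$ down
completely and simultaneously delivers $\deg R\le k-1$. The subtle part is
confirming that the inductively constructed $R_{k+1}$ really coincides with
this forced truncation, i.e.\ that the divisibility by $X^{k+1}$ is genuine
and not an accident of one particular representation; this follows because
$\Psi^{[k+1]}(\balpha)\in\cL_2$ is by construction an honest nonnegative-index
sequence, so its associated series lies in $\FF_2[[X]]$, ruling out any polar
part. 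Once existence, the degree bound, and this truncation characterization
of $R$ are in hand, uniqueness is immediate and the induction closes.
```
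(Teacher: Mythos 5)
Your proof is correct and follows essentially the same route as the paper: induction on $k$ with the one-step formula~\eqref{eqPGoperator} as the engine, yielding the recursion $R_{k+1}(X)=(1+X)R(X)+\beta_0X^k$ and the degree bound $\deg R_{k+1}\le k$. In fact your write-up is slightly more complete than the paper's own proof in two respects. First, the paper's inductive step writes the subtracted constant as $a_0$, which is a notational slip --- the constant produced by~\eqref{eqPGoperator} at that stage is the first entry of $\Psi^{[k]}(\balpha)$, exactly your $\beta_0$. Second, the paper never addresses uniqueness at all, even though the lemma asserts it; your truncation argument --- that $\phi(\Psi^{[k]}(\balpha))\in\FF_2[[X]]$ forces $X^k \mid (1+X)^k\phi(\balpha)-R(X)$, so $R(X)$ must equal the degree-$<k$ truncation of $(1+X)^k\phi(\balpha)$ --- is the missing piece, and it is correct. (One small simplification: once existence with the degree bound is established by the induction, that truncation argument settles uniqueness on its own; you do not separately need to verify that the inductively built $R_{k+1}$ ``coincides with the forced truncation,'' since any valid $R$ of degree $\le k$ automatically does.)
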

\begin{proof}
Let $\phi(\balpha)$ be the power series associated to $\balpha$.
If $k=0$ relation~\eqref{eqNextRows} is trivial and if $k=1$ it coincides with~\eqref{eqPGoperator}.
Next we proceed by induction. Let $k\ge 1$ be fixed and suppose
  \begin{equation}\label{eqIpoteza1}
   \phi\big(\Psi^{[k]}(\balpha)\big) = 
   \frac{ (1+X)^k\phi(\balpha) -R(X)}{X^k}\,,
\end{equation} 
for some $R(X)\in \FF_2[X]$, and $0\le \deg(R(X))\le k-1$.
Then, by~\eqref{eqPGoperator} it follows that
\begin{equation*}%\label{eqIpoteza2}
   \phi\big(\Psi^{[k+1]}(\balpha)\big)
   =\phi\big(\Psi(\phi(\Psi^{[k]}(\balpha)))\big)
   =   \frac{ (1+X)\phi\big(\Psi^{[k]}(\balpha)\big) -a_0}{X}\,.
\end{equation*} 
On inserting~\eqref{eqIpoteza1}, we see that the above is
\begin{equation*}
   \begin{split}
   \phi\big(\Psi^{[k+1]}(\balpha)\big)
   &=\frac{(1+X)\big((1+X)^k\phi(\balpha)-R(X)\big)X^{-k}-a_0}{X}\\
   &=\frac{ (1+X)^{k+1}\phi(\balpha) -R_1(X)}{X^{k+1}}\,,
   \end{split}   
\end{equation*}  
where $R_1(X)=a_0X^k+(1+X)R(X)\in\FF_2[X]$ is a polynomial of degree $\le k$.
This completes the proof of the lemma.
\end{proof}

%%%%%%%%%%%%%%%%%%%%%%%%%%%%%%%%%%%%%
A quasi-periodicity phenomenon that can occur in a triangle is 
the situation in which two rows situated at $l\ge 0$ ranks apart are identical,
except for a finite number of entries at their left-end entry.
In the language of the equivalence classes introduced 
in Section~\ref{SubsectionEquivalenceClasses}, we will say that a row $\balpha$ 
of~\eqref{eqPGTriangle} is an \emph{$l$-leap fixed point} of the Proth-Gilbreath operator if 
${\Psi}^{[l]}(\vu{\balpha}) = \vu{\balpha}$.
Note that any row is a $0$-leap fixed point of $\Psi$ and fixed points are the same as
$1$-leap fixed points of $\Psi$.
Similarly, we say that $\widehat{\balpha}\in\widehat{\cL}$ is  an \textit{$l$-leap fixed point }
of $\widehat{\Psi}$  if $\widehat{\Psi}^{[l]}(\widehat{\balpha})=\widehat{\balpha}$ 
for some natural number $l$. 

Then, using the observation from Lemma~\ref{LemmaEquivalence},  
we know that $\balpha$ is an $l$-leap fixed
point if and only if there exists an integer $r\ge 0$ such that
\begin{equation*}%\label{eqAlt0}
   \begin{split}
   \phi\big(\Psi^{[l]}(\balpha)\big) - X^r\phi(\balpha) \in\FF_2[X]  \quad
   \mathrm{ or }\quad 
   \phi(\balpha) - X^r\phi\big(\Psi^{[l]}(\balpha)\big) \in\FF_2[X].
   \end{split}   
\end{equation*}
On inserting formula~\eqref{eqNextRows}, we find that the above statement is equivalent with
\begin{align*}
   \frac{ (1+X)^l\phi(\balpha) -R(X)}{X^l} - X^r\phi(\balpha) &\in \FF_2[X]\\
%    \quad \mathrm{ or }\quad    
\intertext{or}
   \phi(\balpha) - X^r\frac{ (1+X)^l\phi(\balpha) -R(X)}{X^l} &\in \FF_2[X]
\end{align*}
for some integer $r\ge 0$  and some unique polynomial $R(X)\in\FF_2[X]$ of degree $<l$.
The  `or' statement above is also equivalent with
\begin{align*}
   \big((1+X)^l+X^{l+r}\big)\phi(\balpha) \in \FF_2[X]
   \quad \mathrm{ or }\quad    
% \intertext{or}
  \big(X^{r-l}(1+X)^l+1\big)\phi(\balpha) \in \FF_2[X]\,.
\end{align*}
Next, in the following theorem we restate the obtained result noting that, 
as in Remark~\ref{RemarkTh2}, the above belonging relations can be adapted 
by rewriting them changed from one to the other if we allow 
the power of $X$ to be negative or not.

%%%%%%%%%%%%%%%%%%%%%%%%%%%%%%%%%%%%%%
\begin{theorem}\label{TheoremSeriesFP2}
Let $l\ge0$ be an integer and let $\balpha\in\cL_2$ be a row in the~\eqref{eqPGTriangle} triangle.
Then~$\balpha$ is ultimately replicated identically in the $l$-th row that follows $\balpha$
if and only if there exist an integer $r\ge 0$ and a polynomial $P_l(X)\in\FF_2[X]$
such that 
\begin{equation*}
      \phi(\balpha)=\frac{P_l(X)}{(1+X)^l+X^r} \ \ 
      \mathrm{ or }\ \  
      \phi(\balpha)=\frac{P_l(X)}{X^r (1+X)^l+1}\,.
\end{equation*}
\end{theorem}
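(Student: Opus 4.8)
The plan is to realize the asserted equivalence as a chain of equivalences, so that both implications are obtained at once, and to reduce the geometric ``ultimately replicated'' condition to a single membership statement in $\FF_2[X]$ that can then be read off as a rational form. First I would unwind the definition: to say that $\balpha$ is ultimately replicated identically in the $l$-th row below it is exactly to say that $\Psi^{[l]}(\balpha)\aRb\balpha$, i.e.\ that $\widehat{\balpha}$ is an $l$-leap fixed point of $\widehat{\Psi}$. The degenerate case $l=0$ is immediate (take $r=0$ and $P_l(X)=(1+X)^0+X^0$ trivially, or note the row equals itself), so I would assume $l\ge 1$ throughout.

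Next I would apply Lemma~\ref{LemmaEquivalence} with $\bbeta=\Psi^{[l]}(\balpha)$. This replaces the relation $\aRb$ by the existence of an integer $r\ge 0$ for which \emph{either} $\phi(\Psi^{[l]}(\balpha))-X^r\phi(\balpha)\in\FF_2[X]$ \emph{or} $\phi(\balpha)-X^r\phi(\Psi^{[l]}(\balpha))\in\FF_2[X]$. The central substitution is then the closed formula of Lemma~\ref{LemmaApart}, namely $\phi(\Psi^{[l]}(\balpha))=\big((1+X)^l\phi(\balpha)-R(X)\big)/X^l$ with $\deg R<l$; since all of Lemma~\ref{LemmaEquivalence}, Lemma~\ref{LemmaApart}, and formula~\eqref{eqPGoperator} are equivalences, the converse direction of the theorem will follow by reversing these same steps.

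The computational heart is to clear the $X^l$ denominator and simplify. Because multiplying a power series by $X^l$ neither creates nor destroys nonzero coefficients, membership in $\FF_2[X]$ is unaffected, and since $R(X)$ is already a polynomial it can be discarded. Using $-1=1$ in $\FF_2$, the two alternatives collapse to $\big((1+X)^l+X^{l+r}\big)\phi(\balpha)\in\FF_2[X]$ and $\big(X^{r-l}(1+X)^l+1\big)\phi(\balpha)\in\FF_2[X]$. Finally I would read off the statement: whenever the multiplier $g(X)$ has constant term $1$, it is invertible in $\FF_2[[X]]$, so $g(X)\phi(\balpha)\in\FF_2[X]$ is the same as $\phi(\balpha)=P_l(X)/g(X)$ for a genuine power series, yielding the displayed rational forms after renaming the exponent $l+r$ as a new $r$.

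I expect the main obstacle to be the exponent bookkeeping in the second alternative, where $X^{r-l}$ may carry a negative power of $X$; this is precisely the ``hinge'' situation of Remark~\ref{RemarkTh2}. If $r\ge l$ the factor $X^{r-l}(1+X)^l+1$ already has constant term $1$ and gives the second form directly; if $r<l$ I would multiply the membership relation by $X^{l-r}$ to obtain $\big((1+X)^l+X^{l-r}\big)\phi(\balpha)\in\FF_2[X]$, converting it into a denominator of the first type. Keeping track of which of the two forms each case produces, and confirming that the chosen denominator has unit constant term (so that the quotient is a bona fide element of $\FF_2[[X]]$ rather than a Laurent series), is the delicate part; the remainder is the formal manipulation already prepared by the two lemmas.
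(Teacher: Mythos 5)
Your proposal is correct and follows essentially the same route as the paper's own proof: apply Lemma~\ref{LemmaEquivalence} with $\bbeta=\Psi^{[l]}(\balpha)$, substitute the closed formula of Lemma~\ref{LemmaApart}, clear the $X^l$ denominator and discard $R(X)$ (using $-1=1$) to reach the memberships $\big((1+X)^l+X^{l+r}\big)\phi(\balpha)\in\FF_2[X]$ or $\big(X^{r-l}(1+X)^l+1\big)\phi(\balpha)\in\FF_2[X]$, then read off the rational forms via the hinge of Remark~\ref{RemarkTh2}. If anything, your explicit case split for $r<l$ and the check that the denominator has unit constant term is slightly more careful than the paper's brief appeal to that remark.
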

%%%%%%%%%%%%%%%%%%%%%%%%%%%%%%%%%%%%%%
\section{Proof of Theorems~\ref{TheoremRationalFunctions1} and~\ref{TheoremRationalFunctions2}}\label{SubsectionProofTheorem12}
We can now use Theorem~\ref{TheoremSeriesFP2} to interpret the patterns of~\eqref{eqPGTriangle}  
and draw out information about formal power series.
For this, the basic link is made clear in the following statement.

\begin{remark}\label{RemarkLink}
   Let $l\ge0$ be an integer and let $\balpha\in\cL_2$ be a row in the~\eqref{eqPGTriangle} triangle.
   Then $\widehat{\Psi}^{[l]}(\widehat\balpha)=\widehat\balpha$
   if and only if
   the series of rows that start with $\balpha$ belongs to a sequence of equivalence classes 
   that is periodic and $l$ is the length of a period. 
\end{remark}
Let now $f(X)\in\FF_2[[X]]$ and suppose 
$f(X)=\frac{P(X)}{1+X+X^r}$ or 
$f(X)=\frac{P(X)}{X^r(1+X)+1}$
for some integer $r\ge 0$ and 
some polynomial $P(X) \in \FF_2[X]$.
By Theorem~\ref{TheoremSeriesFP2} with $l=1$, it follows that $f(X)=\phi(\balpha)$ for some 
$\balpha\in\cL_2$ and $\widehat\balpha=\widehat\Psi(\widehat\balpha)$.
Then  $\balpha$ is a fixed point not only for~$\widehat\Psi$, 
but also for its iterations $\widehat\Psi^{[l]}$ for $l\ge 0$.
Using the observation in Remark~\ref{RemarkLink} we see that 
the statement with the rational expressions of $\psi(\balpha)$ from 
Theorem~\ref{TheoremSeriesFP2} is equivalent with the second statement from Theorem~\ref{TheoremRationalFunctions1}, which is now proved.

To prove Theorem~\ref{TheoremRationalFunctions2} note that its hypothesis is equivalent with
the fact that the row $\balpha$ for which $f(X)=\phi(\balpha)$ 
is a leap-fixed point of orders $l_1,l_2,\dots,l_r$.
That is, in the \eqref{eqPGTriangle} triangle $\widehat\balpha$ repeats periodically with each 
of the periods $l_1,l_2,\dots,l_r$. 
A simple argument by induction then shows that $l:=\gcd(l_1,l_2,\dots,l_r)$ is also a period
on which  $\widehat\balpha$ repeats in the triangle.
Then Theorem~\ref{TheoremRationalFunctions2} follows as a consequence of Remark~\ref{RemarkLink}
and Theorem~\ref{TheoremRationalFunctions1}.

%%%%%%%%%%%%%%%%%%%%%%%%%%%%%%%%%%%%%%%
\section{Some relevant examples}
In particular cases the Proth-Gilbreath operator action is similar to the transformations that
occur in the Ducci number game~\cite{CZ2023,CCZ2000}. 
There the action is on the numbers placed around on a torus,
which can be unfolded equivalently into a periodic sequence.
In the particular case with numbers in $\FF_2$ the Ducci operation 
replaces the numbers from a generation to the next with the sums of neighbors.

% \usepackage{tipa}
% $\textctc$ and $\textctd$

\subsection{Example \texorpdfstring{$\bdelta$}{delta}}
Of particular interest in the Ducci game are initial states that generate unusually long cycles.
Such an example starts with the finite sequence $(1,0,0,0,1)$ placed on a torus.
Its periodic unfolded version is then the sequence:
$\bdelta = (1,0,0,0,1,1,0,0,0,1,\dots)$. 
Then the lines $\Psi^{[k]}(\bdelta)$, ${k\ge 0}$, are also periodic,
and finding their general expressions reduces to finding the evolution of their first five components.
But this is the same as the recursive outcome of the Ducci operation:
{\small
\begin{equation*}
   \begin{split}
   & (1,0,0,0,1) \rightarrow (1,0,0,1,0) \rightarrow (1,0,1,1,1) \rightarrow 
     (1,1,0,0,0) \rightarrow (0,1,0,0,1) \rightarrow \\
   & (1,1,0,1,1) \rightarrow (0,1,1,0,0) \rightarrow (1,0,1,0,0)\rightarrow 
     (1,1,1,0,1) \rightarrow (0,0,1,1,0) \rightarrow \\
   & (0,1,0,1,0) \rightarrow (1,1,1,1,0) \rightarrow (0,0,0,1,1) \rightarrow 
   (0,0,1,0,1) \rightarrow (0,1,1,1,1) \rightarrow (1,0,0,0,1) \rightarrow\cdots
   \end{split}   
\end{equation*}
}
We see that the evolution cycles in fifteen steps, so that $\Psi^{[15]}(\bdelta)=\bdelta$.
Then, a closer inspection shows that if we make equivalent sequences that are the same modulo 
 a rotation around the torus,
then the cycle length is only $3$, the repeated pattern being of two ones followed by three zeros.

In the language of the formal series it then follows that the shortest period for the sequence of iterations of $\widehat\Psi$ is $3$ and $\widehat\Psi^{[3k]}(\widehat{\bdelta})=\widehat{\bdelta}$ 
for $k\ge 0$.
Precisely, we have
\begin{equation}\label{eqDelta1}
   \begin{split}
   \phi(\bdelta) &= 1 + X^4 + X^5 + X^9 + x^{10} + x^{14} + x^{15} + \cdots\\
   & = (1+X^4)\left(1+X^5+X^{10}+X^{15}+\cdots\right)\\
   & = \frac{1+X^4}{1+X^5}\,.
   \end{split}   
\end{equation}
To express $\phi(\bdelta)$ in the form from Theorem~\ref{TheoremRationalFunctions1},
with $l=3$ and $r=4$ we have to find the polynomial $P(X)$ that satisfies condition
\begin{equation*}
   \frac{P(X)}{(1+X)^3 + X^4} = \frac{1+X^4}{1+X^5}\,.
\end{equation*}
We obtain $P(X) = 1 + X + X^2 + X^3$, and consequently, besides~\eqref{eqDelta1}, we also have
the representation
\begin{equation*}%\label{eqDelta2}
    \phi(\bdelta) =   \frac{1 + X + X^2 + X^3}{(1+X)^3+X^4}\,.  
\end{equation*}

%%%%%%%%%%%%%%%%%%%%%%%%%%%%%%%%%%%%%%%%%%%%%%%%%%%%%%%%%%
\subsection{Example \texorpdfstring{$\bgamma$}{gamma}}
Consider the sequence $\bgamma = (1,1,0,0,0,1,1,1,1,0,0,0,1,1,\dots)$
in which the first seven entries $(1,1,0,0,0,1,1)$ repeat periodically.
{\small
\begin{equation*}
   \begin{split}
      & (1,0,0,0,1,0,0) \rightarrow 
   (1,0,0,1,1,0,1) \rightarrow 
   (1,0,1,0,1,1,0) \rightarrow \\
    &  (1,1,1,1,0,1,1) \rightarrow 
   (0,0,0,1,1,0,0) \rightarrow 
   (0,0,1,0,1,0,0) \rightarrow\\
    &  (0,1,1,1,1,0,0) \rightarrow
   (1,0,0,0,1,0,0) \rightarrow\cdots
   \end{split}
\end{equation*}
}
The series corresponding to $\bgamma$ is
\begin{equation*}%\label{eqGamma1}
   \begin{split}
   \phi(\bgamma) 
%    &= 1 + X^4 + X^5 + X^9 + x^{10} + x^{14} + x^{15} + \cdots\\
%    & = (1+X^4)\left(1+X^5+X^{10}+X^{15}+\cdots\right)\\
   & = \frac{1+X^4}{1+X^7}\,.
   \end{split}   
\end{equation*}
This can also be written as
\begin{equation*}
   \begin{split}
   \phi(\bgamma) = \frac{1+X+X^2+X^3}{(1+X)^7+X^7}\,.
   \end{split}   
\end{equation*}

%%%%%%%%%%%%%%%%%%%%%%%%%%%%%%%%%%%%%%%%%%%%%%%%%%%%%%%%%%
\subsection{Example \texorpdfstring{$\bnu$}{nu}}

Consider the $5$-tuple $(1,0,0,0,0)$ that repeats periodically to generate the row
% $\bnu = (1,0,0,0,0,1,0,0,0,0,1,0,0,0,0,\dots)$.
\begin{equation*}
   \begin{split}
   \bnu = (1,0,0,0,0,1,0,0,0,0,1,0,0,0,0,\dots).
   \end{split}   
\end{equation*}
Then the series corresponding to $\bnu$ is
\begin{equation*}%\label{eqGamma1}
   \begin{split} 
   \phi(\bnu) = \sum_{k\ge 0}X^{5k}  = \frac{1}{1+X^5}\,.
   \end{split}   
\end{equation*}
Then one can check directly that $\phi(\bnu)$ cannot be expressed as a rational function
in any of the forms
\begin{equation*}
%          \mathrm{either }\  \  
      \frac{P(X)}{(1+X)^l+X^r}\ \ 
      \mathrm{ or }\ \  
      \frac{P(X)}{X^r(1+X)^l+1},
\end{equation*}
for any positive integers $l,r$ and any polynomial $P(X)\in\FF_2[X]$.
This could have been done if the hypotheses of Theorem~\ref{TheoremSeriesFP2} had been fulfilled.
But the series $\phi(\nu)$ does not meet them. 
Indeed, on the discrete torus of length $5$, the Ducci operation transforms
$(1,0,0,0,0)$ into $(1,1,0,0,0)$.
But, as observed in the above example for the row $\bdelta$,
$(1,1,0,0,0)$ belongs to a cycle, whereas $(1,0,0,0,0)$ does not,
$(1,0,0,0,0)$ is part of a pre-cycle not a cycle.

%%%%%%%%%%%%%%%%%%%%%%%%%%%%%%%%%%%%%%%%%%%%%%%%%%%%%%%%%%%5
%%%%%%%%%%%%%%%%%%%%%%%
\subsection{ Example \texorpdfstring{$\biota$}{iota}}
The example after Theorem~\ref{TheoremRationalFunctions2} in the introduction
is based on the sequence~$\biota$ whose first $127$ terms are represented by the dots in 
Figure~\ref{FigureCoefficientsOff}. Afterwards, the terms repeat periodically, and 
consequently $\phi(\biota)=f(X)$, where $f(X)$ is the series defined by~\eqref{eqfdeX}.
The example was build 
starting with the observation from Lemma~\ref{LemmaRoots} that the roots of $X^7+X+1$
are distinct, and $K$, the smallest field extension $\FF_2\subset K$ that contains 
all the roots has the multiplicative group of order $2^7-1=127$.
Then we know that $X^7+X+1$ divides $X^{127}-1$ in $\FF_2[X]$.
It follows that for $f(X)$, the formal power series corresponding to the periodic consequent line 
in the triangle~\eqref{eqPGTriangle}, there exists $Q(X)\in\FF_2[X]$ such that
\begin{equation*}
   \begin{split}
   f(X):=\frac{X+X^6}{1+X+X^7} = \frac{Q(X)}{X^{127}-1}
   = Q(X)\sum_{k\ge 0}X^{127k}\,.
   \end{split}   
\end{equation*}
The polynomial $Q(X)$ has degree $126$, the powers of its non-zero terms are the elements
of the set $\cM$, and it can be split as a product of irreducible polynomials in $\FF_2[X]$ as
{\small
\begin{equation*}
   \begin{split}
  Q(X)= & X (X + 1)^{2} (X^{4} + X^{3} + X^{2} + X + 1) (X^{7} + X^{3} + 1) 
  (X^{7} + X^{3} + X^{2} + X + 1) \\
  &\cdot (X^{7} + X^{4} + 1) (X^{7} + X^{4} + X^{3} + X^{2} + 1) 
  (X^{7} + X^{5} + X^{2} + X + 1) \\
  &\cdot (X^{7} + X^{5} + X^{3} + X + 1) (X^{7} + X^{5} + X^{4} + X^{3} + 1) \\
  &\cdot (X^{7} + X^{5} + X^{4} + X^{3} + X^{2} + X + 1) (X^{7} + X^{6} + 1) \\
  &\cdot (X^{7} + X^{6} + X^{3} + X + 1) (X^{7} + X^{6} + X^{4} + X + 1) \\
  &\cdot (X^{7} + X^{6} + X^{4} + X^{2} + 1) (X^{7} + X^{6} + X^{5} + X^{2} + 1) \\
  &\cdot (X^{7} + X^{6} + X^{5} + X^{3} + X^{2} + X + 1) (X^{7} + X^{6} + X^{5} + X^{4} + 1) \\
  &\cdot (X^{7} + X^{6} + X^{5} + X^{4} + X^{2} + X + 1) 
  (X^{7} + X^{6} + X^{5} + X^{4} + X^{3} + X^{2} + 1)\,.
   \end{split}   
\end{equation*}
}

%%%%%%%%%%%%%%%%%%%%%%%%%%%%%%%%%%%%%%%%%%%%%%5

%%%%%%%%%%%%%%%%%%%%%%%%%%%%%%%%%%

\begin{thebibliography}{909}

\bibitem{Bha2022a}
{\sc Raghavendra N. Bhat},
Distribution of square-prime numbers,
\emph{Missouri J. Math. Sci.} \textbf{34 (1)}, 121--126  (2022).
% Zbl 07534651
\url{https://doi.org/10.35834/2022/3401121}
\url{https://arxiv.org/pdf/2109.10238.pdf}

\bibitem{Bha2022b}
{\sc Raghavendra N. Bhat},
Sequences, Series and Uniform distribution of SP Numbers, 
\emph{arxiv} preprint, 7 pp. (2022).
\url{https://arxiv.org/pdf/2210.04622.pdf}

\bibitem{BCZ2023}
{\sc Raghavendra N. Bhat, Cristian Cobeli, Alexandru Zaharescu}, 
Filtered rays over iterated differences on layers of integers,
preprint (2023).

\bibitem{BMa2022}
{\sc Raghavendra N. Bhat, Sundarraman Madhusudanan},
Algebraic Results on SP Numbers along with a generalization, 
\emph{arxiv} preprint, 7 pp. (2022).
\url{https://arxiv.org/pdf/2211.09009.pdf}

\bibitem{CZZ2013}
{\sc Mihai Caragiu, Alexandru Zaharescu, Mohammad Zaki}, 
An analogue of the Proth-Gilbreath conjecture, 
\emph{Far East J. Math. Sci. (FJMS)} \textbf{81 (1)}, 1--12 (2013).
% Zbl 1287.11009
\url{http://www.pphmj.com/abstract/7973.htm}

\bibitem{CCZ2000}
{\sc C. I. Cobeli, M. Crâşmaru, A. Zaharescu},
A cellular automaton on a torus,
\emph{Port. Math.} \textbf{57 (3)}, 311--323 (2000).
%Zbl 0987.11011
\url{https://www.emis.de/journals/PM/57f3/pm57f305.pdf}

\bibitem{CZ2013}
{\sc Cristian Cobeli, Alexandru Zaharescu}, 
Promenade around Pascal triangle – number motives,
\emph{Bull. Math. Soc. Sci. Math. Roum., Nouv. Sér.} 
\textbf{56(104) (1)}, 73--98 (2013).
% Zbl 1289.11001
\url{https://www.jstor.org/stable/43679285}
% \url{https://rms.unibuc.ro/bulletin/pdf/56-1/PromenadePascalPart1.pdf}
% search rms.unibuc.ro promenade around pascal triangle

\bibitem{CZ2014}
{\sc Cristian Cobeli, Alexandru Zaharescu}, 
A game with divisors and absolute differences of exponents,
\emph{J. Difference Equ. Appl.} \textbf{20 (11)}, 1489--1501 (2014).
%  Zbl 1361.11016
\url{https://doi.org/10.1080/10236198.2014.940337}

\bibitem{CPZ2016}
{\sc Cristian Cobeli, Mihai Prunescu, Alexandru Zaharescu}, 
A growth model based on the arithmetic $Z$-game,
\emph{Chaos Solitons Fractals} \textbf{91}, 136--147 (2016).
%  Zbl 1372.37027
\url{https://doi.org/10.1016/j.chaos.2016.05.016}

\bibitem{CZ2023}
{\sc Cristian Cobeli, Alexandru Zaharescu}, 
Flurries of Ducci waves,
\emph{Bull. Math. Soc. Sci. Math. Roumaine}
\textbf{66(114) (2)}, 177--188 (2023).


\bibitem{Gil2011}
{\sc Norman Gilbreath}, 
Processing process: the Gilbreath conjecture, 
\emph{J. Number Theory} \textbf{131 (12)}, 2436--2441 (2011).
% Zbl 1254.11006
\url{https://doi.org/10.1016/j.jnt.2011.06.008}

\bibitem{Guy1988}
{\sc Richard K. Guy}, 
The strong law of small numbers,
\emph{Am. Math. Mon.} \textbf{95 (8)}, 697--712 (1988).
%Zbl 0658.10001
\url{https://doi.org/10.2307/2322249}
%Example 12
% https://oeis.org/search?q=Gilbreath&language=english&go=Search

\bibitem{Guy2004}
{\sc Richard K. Guy},
\emph{Unsolved problems in number theory}. 3rd ed. 
Problem Books in Mathematics. New York, NY: Springer-Verlag (ISBN 0-387-20860-7/hbk). xviii, 437 pp. (2004).
%Zbl 1058.11001

\bibitem{Mon1994}
{\sc Hugh L. Montgomery},
Ten lectures on the interface between analytic number theory and harmonic analysis,
\emph{Regional Conference Series in Mathematics} \textbf{84}. 
Providence, RI: American Mathematical Society (AMS). xii, 220 pp. (1994). 
%Zbl 0814.11001
%Appendix Problem 68

\bibitem{Pru2022}
{\sc Mihai Prunescu},
Symmetries in the Pascal triangle: $p$-adic valuation, sign-reduction modulo $p$ and the last non-zero digit, 
\emph{Bull. Math. Soc. Sci. Math. Roumaine}
\textbf{65(113) (4)}, 431--447 (2022).
\url{https://ssmr.ro/bulletin/pdf/65-4/articol_6.pdf}

\bibitem{KR1959}
{\sc R. B. Killgrove, K. E. Ralston},
On a conjecture concerning the primes,
\emph{Math. Tables Aids Comput.} \textbf{13}, 121--122 (1959).
%Zbl 0092.04902
\url{https://doi.org/10.2307/2001963}

\bibitem{Odl1993}
{\sc Andrew M. Odlyzko}, 
Iterated absolute values of differences of consecutive primes,
\emph{Math. Comput.} \textbf{61 (203)}, 373--380 (1993).
% Zbl 0781.11037
\url{https://doi.org/10.2307/2152962}

\bibitem{Pro1878}
{\sc F. Proth}, 
Sur la série des nombres premiers, 
\emph{Nouvelle Correspondance Mathématique} \textbf{4}, 236--240 (1878).
%Göttinger Digitalisierungszentrum Ein Service der SUB Göttingen
\url{https://gdz.sub.uni-goettingen.de/download/pdf/PPN598948236_0004/LOG_0088.pdf}
% Full volume:
%https://gdz.sub.uni-goettingen.de/id/PPN598948236_0004?tify=%7B%22pages%22%3A%5B270%5D%2C%22pan%22%3A%7B%22x%22%3A0.398%2C%22y%22%3A0.556%7D%2C%22view%22%3A%22export%22%2C%22zoom%22%3A0.642%7D

\end{thebibliography}
\end{document}